\begin{document}
\parskip=6pt

\theoremstyle{plain}

\newtheorem {thm}{Theorem}[section]
\newtheorem {lem}[thm]{Lemma}
\newtheorem {cor}[thm]{Corollary}
\newtheorem {defn}[thm]{Definition}
\newtheorem {prop}[thm]{Proposition}
\newtheorem{ex}[thm]{Example}
\numberwithin{equation}{section}

\newcommand{\cF}{{\cal F}}
\newcommand{\cA}{{\cal A}}
\newcommand{\cC}{{\cal C}}
\newcommand{\cH}{{\cal H}}
\newcommand{\cU}{{\cal U}}
\newcommand{\cK}{{\cal K}}
\newcommand{\cM}{{\cal M}}
\newcommand{\cO}{{\cal O}}
\newcommand{\cN}{{\cal N}}
\newcommand{\cP}{{\cal P}}
\newcommand{\cR}{{\cal R}}
\newcommand{\bC}{\mathbb C}
\newcommand{\bP}{\mathbb P}
\newcommand{\bN}{\mathbb N}
\newcommand{\bA}{\mathbb A}
\newcommand{\bR}{\mathbb R}
\newcommand{\fg}{\frak g}
\newcommand{\fy}{\frak y}
\newcommand{\fh}{\frak h}
\newcommand{\fV}{\frak V}
\newcommand{\var}{\varepsilon}
\renewcommand\qed{ }
\newcommand{\sgrad}{\text{sgrad }}
\newcommand{\grad}{\text{grad }}
\newcommand{\id}{\text{id}}
\newcommand{\Id}{\text{Id}}
\newcommand{\ad}{\text{ad}}
\newcommand{\const}{\text{const }}
\newcommand{\Ker}{\text{Ker }}
\newcommand{\Hom}{\text{Hom}\,}
\newcommand{\opartial}{\overline\partial}
\newcommand{\Ree}{\text{Re }}
\newcommand{\an}{\text{an}}
\newcommand{\tr}{\text{tr}}

\begin{titlepage}
\title{\bf Riemannian geometry in infinite dimensional spaces\thanks{Research partially supported by NSF grant DMS--1464150\newline 2010 Mathematics subject classification 58B20}}
\author{L\'aszl\'o Lempert\\ Department of  Mathematics\\
Purdue University\\West Lafayette, IN
47907-2067, USA}
\end{titlepage}
\date{}
\maketitle
\abstract

We lay foundations of the subject in the title, on which we build in another paper devoted to isometries in spaces of K\"ahler metrics.

\section{Introduction}

Our modest goal with this paper is to collect foundational material in differential and Riemannian geometry in infinite dimensional spaces, beyond the framework of Banach and Hilbert manifolds as treated by Lang in \cite{La}.
It is unlikely that any of the results or proofs we collect here will surprise the reader; yet this material is necessary to buttress our work \cite{Le} on isometries in spaces of K\"ahler potentials.

We will develop the notions of Riemannian metrics, connections---Levi--Civita or otherwise---, curvature, and geodesics in open subsets of locally convex (topological vector) spaces, rather than in manifolds modeled on such spaces.
This allows for various simple definitions: for example, to define the differential of a form $\alpha$ we need not bring in Cartan's formula, general vector fields and their Lie brackets, and then verify that the formula indeed produces a form $d\alpha$.
If the definitions become simpler, it becomes more complicated, though, to elucidate how the notions defined transform under smooth maps.
So, in addition to giving the definitions, what we mainly do in this paper is to justify transformation rules familiar from finite dimensional geometry.  Overall, when the transformations are $C^2$ the proofs are quite straightforward, but they are less so when the transformations are only $C^1$.

The papers \cite{H,Mi} (and undoubtedly many others) have some overlap with this one, although their perspectives and goals are different, and do not delve into Riemannian matters.

\section{Calculus}

It will not hurt to start by going over the main notions of differential calculus in Fr\'echet and more general spaces.
Given a real vector space $V$, a family $\cP$ of seminorms endows it with the structure of a locally convex (topological vector) space: a neighborhood basis of $0\in V$ consists of intersections of finitely many balls $\{v\in V\colon p(v)<\var\}$, $p\in\cP$.
All our locally convex spaces will be assumed Hausdorff and sequentially complete.
For $(V,\cP)$ this latter means that if $v_n\in V$, $n\in\bN$, and $v_n-v_m\to 0$ as $n,m\to\infty$, then the $v_n$ converge.

If $W$ is another locally convex space and $\Omega\subset V$ is open, a map $f\colon\Omega\to W$ is $C^1$ if its directional derivatives
$$
df(v,\xi)=\lim_{t\to 0}\ {f(v+t\xi)-f(v)\over t},\qquad v\in\Omega,\quad\xi\in V
$$
exist and $df\colon \Omega\times V\to W$ is continuous.
It then follows that $df(v,\xi)$ is linear in $\xi$.
(For Banach spaces this is a slightly weaker requirement than the one in the standard definition, to wit, $v\mapsto df(v,\cdot)\in \Hom (V,W)$ should be continuous in the norm topology on $\Hom(V,W)$.)
If $df$ is not only continuous but $C^1$, we say that $f$ is $C^2$, and so on. If $S$ is an arbitrary subset of $V$, by $C^k(S,W)$ we mean the space of functions $f:S\to W$ that extend to a $C^k$ function in some open neighborhood $\Omega\supset S$. 
We write $C(S,W)=C^0(S,W)$ for continuous maps $S\to W$ and $C^\infty(S,W)$ for $\bigcap^\infty_{k=1} C^k (S,W)$. Often it is convenient to write $(\xi f)(v)$ for $df(v,\xi)$ and $\xi\eta f$ for $\xi(\eta f)$, etc.

\begin{ex}If $\phi\colon V^{\oplus r}\to W$ is continuous and multilinear, the function $f\colon V\ni v\mapsto \phi (v,v,\ldots,v)\in W$ is $C^\infty$.
\end{ex}

Indeed, the directional derivatives
$$
df(v,\xi)=\phi(\xi,v,\ldots,v)+\phi(v,\xi,v,\ldots,v)+\ldots+\phi (v,v,\ldots,v,\xi)
$$
exist, and $df\colon V\oplus V\to W$ is continuous. It is also the restriction of a multilinear form to the diagonal, and
the smoothness of $f$ follows by induction.

Functions $f$ of the form above are called homogeneous polynomials, and sums of finitely many such functions are called (continuous) polynomials.

For the next example let $R=\prod_1^n (a_i,b_i)\subset\bR^n$ be a (bounded) rectangular box. It is easy to check that $u\colon\bar R\to V$ is
in $C^k(\bar R,V)$ for some $k=0,1,\ldots$ if and only if it is continuous and all partial derivatives $\partial^\alpha (u|R)$ of order $|\alpha|\le k$ extend continuously to $\bar R$. Any $p\in\cP$ and multiindex $\alpha$ of length $\le k$ induces a seminorm
$$
p_\alpha(u)=\sup_R p(\partial^\alpha u) 
$$ 
on $C^k(\bar R,V)$, and these seminorms  together define a locally convex topology on $C^k(\bar R,V)$, that is sequentially complete. If $\Omega\subset V$ is open, then 
$$
C^k(\bar R,\Omega)=\{u\in C^k(\bar R,V)\colon u(\bar R)\subset\Omega\}
$$
is open in $C^k(\bar R,V)$.

\begin{ex}If $l=0,1,\ldots$ and $f\in C^{k+l}(\Omega,W)$, then the function
$$
F\colon C^k(\bar R,\Omega)\ni u\mapsto f\circ u\in C^k(\bar R,W)
$$
is $C^l$.
\end{ex}

This is first proved for $l=0$ by induction on $k$. If $l=1$, one computes that with $u\in C^k(\bar R,\Omega)$, $\zeta\in C^k(\bar R,V)$
\begin{equation}
dF(u,\zeta)=\lim_{t\to 0}\frac{f(u+t\zeta)-f(u)}t=(df)\circ(u,\zeta)\in C^k(\bar R, W),
\end{equation}
so $dF$ is continuous by the $l=0$ case. Finally, for general $l$ the result follows from (2.1) by induction.

We will not need the general notion of manifolds modeled on $V$, but we will introduce the tangent bundle $T\Omega$ as one does in a manifold: tangent vectors are equivalence classes of $C^1$ maps $I\to\Omega$, with $I$ some neighborhood of $0\in\bR$.
Two $C^1$ maps $f\colon I\to\Omega$, $g\colon J\to\Omega$ are equivalent if $f(0)=g(0)$ and $df(0,\cdot)=dg(0,\cdot)$.
The map
$$
T\Omega\ni [f]\mapsto (f(0), df(0,1))\in\Omega\times V
$$
is a bijection.
We will typically identify $T\Omega$ with the open subset $\Omega\times V\subset V\oplus V$, and the tangent spaces $T_v\Omega=\{[f]\colon f(0)=v\}$ with $V$ itself.
This allows us to talk about smoothness of maps between tangent bundles.
A map $F\in C^k (\Omega,W)$ induces a $C^{k-1}$ map between tangent bundles, denoted $F_*$,
$$
F_*\colon T\Omega\ni [f]\mapsto [F\circ f]\in TW,
$$
and we write $F_*|_v$ for its restriction $T_v\Omega\to T_{F(v)} W$.
In the identification above, $F_*(v,\xi)=(F(v), dF(v,\xi))$.

Because of the identifications $T_v\Omega\approx V$, $T_{F(v)} W\approx W$, often we will use $F_*|_v$ to denote the corresponding linear map $V\to W$, i.e., $F_*|_v=df(v,\cdot)$.

We will also need calculus of functions that take values in $\Hom (W,Z)$, the vector space of continuous linear maps between locally convex spaces $W,Z$.
There are several natural topologies on this space but  none of them is of any use for differential geometry unless $W$ is Banach, cf.~\cite{Ma}.
Accordingly, we forgo introducing a topology on $\Hom (W,Z)$, and define $f\colon\Omega\to\Hom (W,Z)$ to be $C^k$ if the map
$$
\Omega\times W\ni (v,w)\mapsto f(v) w\in Z
$$
is $C^k$.
We express this by writing $f\in C^k(\Omega,\Hom(W,Z))$.
If $f\in C^1(\Omega,\Hom (W,Z))$, its differential $df\colon\Omega\times V\to\Hom (W,Z)$ is defined by
$$
df(v,\xi) w=\lim_{t\to 0}\ {f(v+t\xi) w-f(v)w\over t}.
$$

This language restores, to a certain extent, the analogy with Fr\'echet's notion of differentiability of maps between Banach spaces: in our general setting, $f\colon\Omega\to W$ is $C^k$ if the map $v\mapsto df(v,\cdot)\in\Hom (V,W)$ is in $C^{k-1}(\Omega,\Hom (V,W))$.

One can check that if $f$ is $C^k$ on $\Omega$, and $g\colon\Omega'\to\Omega$ is a $C^k$ map of an open $\Omega'\subset V'$, $V'$ locally convex, then $f\circ g$ is $C^k$, and the chain rule holds. Once the chain rule is known, it follows that if $f\colon\Omega\to\Hom(W,Z)$ and $g\colon\Omega\to\Hom(Z,X)$ are $C^k$, then
$$
(gf)(v)=g(v)f(v)\in\Hom(W,X),\qquad v\in\Omega,
$$ 
defines a $C^k$ function $\Omega\to\Hom(W,X)$, whose differential is
$$
d(gf)(v,\xi)=dg(v,\xi)f(v)+g(v)df(v,\xi).
$$

We will also need the notion of integral. We will only integrate piecewise continuous $V$ valued functions on intervals, and then the integral can be defined as the limit of Riemann 
sums.---To finish this section here are two criteria for maps to be $C^k$. We fix $k\in\bN$.
\begin{lem}Let  $I\subset\bR$  be an open interval. A continuous function $h\colon I\to W$ is $C^k$ if and only
\begin{equation}
g(\tau)=\lim_{\sigma\to 0}\frac{\sum_{j=0}^k (-1)^{k-j}\binom kj h(\tau+j\sigma)}{\sigma^k}
\end{equation}
defines a continuous function $g\colon I\to W$. In this case $d^kh/d\tau^k=g$.
\end{lem}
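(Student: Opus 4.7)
The plan is to pass between the $k$-th forward difference and an integral involving $h^{(k)}$, and to handle the converse by reducing (via Hahn--Banach) to a scalar polynomial-characterization statement.

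Write $\Delta^k_\sigma h(\tau)=\sum_{j=0}^k(-1)^{k-j}\binom{k}{j}h(\tau+j\sigma)$ for the numerator in (2.2). For the $(\Rightarrow)$ direction I would first prove by induction on $k$ that whenever $h\in C^k(I,W)$,
$$\Delta^k_\sigma h(\tau)=\int_0^\sigma\cdots\int_0^\sigma h^{(k)}(\tau+s_1+\cdots+s_k)\,ds_1\cdots ds_k.$$
The base case $k=1$ is the vector-valued fundamental theorem of calculus, available through the Riemann-sum integration set up earlier in the paper. The inductive step writes $\Delta^{k+1}_\sigma h(\tau)=\Delta^k_\sigma h(\tau+\sigma)-\Delta^k_\sigma h(\tau)=\int_0^\sigma\Delta^k_\sigma h'(\tau+s)\,ds$ (since $d/d\tau$ commutes with $\Delta_\sigma$) and then substitutes the inductive hypothesis applied to $h'$. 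Dividing by $\sigma^k$ yields an iterated average of $h^{(k)}$ on a small cube based at $\tau$, which tends to $h^{(k)}(\tau)$ by continuity; this settles the forward implication and identifies $g=d^kh/d\tau^k$.

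For the $(\Leftarrow)$ direction, assume $g$ in (2.2) exists and is continuous. I would construct $G\in C^k(I,W)$ with $G^{(k)}=g$ by $k$-fold iterated integration of $g$ (equivalently, via Cauchy's formula $G(\tau)=\frac{1}{(k-1)!}\int_a^\tau(\tau-s)^{k-1}g(s)\,ds$ for a fixed basepoint $a\in I$). The already-established forward direction applied to $G$ gives $\Delta^k_\sigma G(\tau)/\sigma^k\to g(\tau)$, so $\phi:=h-G$ is continuous on $I$ with $\Delta^k_\sigma\phi(\tau)/\sigma^k\to 0$ pointwise. Everything now reduces to showing that $\phi$ must be a polynomial of degree $<k$ in $\tau$ with coefficients in $W$; granted this, $h=G+\phi$ is $C^k$ and $d^kh/d\tau^k=g+0=g$.

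To prove the polynomial claim, I would reduce to the scalar case: any continuous linear $\lambda\colon W\to\bR$ turns the hypothesis into the statement that $\lambda\circ\phi\colon I\to\bR$ is continuous with vanishing normalized $k$-th symmetric difference, and a classical Marchaud--Denjoy-type theorem then forces $\lambda\circ\phi$ to be a polynomial of degree $<k$. Fixing $k$ distinct nodes $\tau_0,\dots,\tau_{k-1}\in I$, form the $W$-valued Lagrange interpolant $P$ of degree $<k$ matching $\phi$ at those nodes; for each $\lambda$, both $\lambda\circ P$ and $\lambda\circ\phi$ are real polynomials of degree $<k$ agreeing at $k$ points, so they coincide, and since $W^{*}$ separates points on the Hausdorff locally convex $W$ (Hahn--Banach), $\phi=P$. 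The main obstacle I anticipate is precisely the scalar step: deducing polynomial structure from merely pointwise (rather than locally uniform) vanishing of the $k$-th difference quotient is the delicate content, and its proof typically requires either an a priori upgrade of the convergence mode or a Baire-category argument; if a direct vector-valued argument is available it would bypass this, but I do not see one.
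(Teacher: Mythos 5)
Your forward direction coincides with the paper's: the same iterated--integral identity
$\Delta^k_\sigma h(\tau)=\int_0^\sigma\cdots\int_0^\sigma h^{(k)}(\tau+s_1+\cdots+s_k)\,ds_1\cdots ds_k$
and the same averaging argument. The first half of your converse is also exactly the paper's: integrate $g$ $k$ times to get $G$ with $G^{(k)}=g$, and reduce to showing that $\phi=h-G$, a continuous function with $\Delta^k_\sigma\phi(\tau)/\sigma^k\to0$ pointwise, is a polynomial of degree $<k$.

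The gap is the one you flag yourself. Your argument ultimately rests on the scalar statement that a continuous real function whose normalized $k$-th forward difference tends to $0$ at every point is a polynomial of degree $<k$, and you leave this as an appeal to a ``classical Marchaud--Denjoy-type theorem'' that you say you cannot prove. Since that statement carries the entire content of the converse, the proposal as written is incomplete. The paper's way around it is precisely the ``direct vector-valued argument'' you suspected might exist: mollification. Convolve $\phi$ with smooth compactly supported $\chi_\varepsilon$; since $\Delta^k_\sigma(\phi*\chi_\varepsilon)=(\Delta^k_\sigma\phi)*\chi_\varepsilon$, the mollified function $\phi_\varepsilon$ again satisfies (2.2) with $g=0$, but now $\phi_\varepsilon$ is smooth, so by the already-proved forward direction $\phi_\varepsilon^{(k)}=\lim_{\sigma\to0}\Delta^k_\sigma\phi_\varepsilon/\sigma^k=0$; hence $\phi_\varepsilon$ is a polynomial of degree $<k$, and choosing $\chi_\varepsilon\,d\tau$ to converge weak$^*$ to the Dirac measure shows that $\phi=\lim\phi_\varepsilon$ is one as well. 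Your Hahn--Banach/Lagrange-interpolation device for assembling a $W$-valued polynomial from its scalar projections is correct but becomes unnecessary on this route. If you insist on your own route you must actually prove the scalar Riemann-derivative theorem, which is substantially harder than the mollification argument; and in either write-up the interchange of the limit $\sigma\to0$ with the convolution integral deserves a word of justification (some local uniformity in $\tau$ of the limit (2.2)), a point the paper itself passes over quickly.
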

\begin{proof}The only if direction follows from the identity
$$
\sum_{j=0}^k (-1)^{k-j}\binom kj h(\tau+j\sigma)=\int_0^{\sigma}\ldots\int_0^{\sigma}\frac{d^kh}{d\tau^k}(\tau+s_1+\ldots+s_k)ds_1\ldots ds_k.
$$
As to the opposite direction,  integrating $g$ $k$ times gives an $H\in C^k( I, Z)$ such that $d^kH/d\tau^k=g$. 
Upon replacing $h$ by $h-H$ we can reduce ourselves to the case $g=0$, when we need to show that $h$ is a polynomial of degree $<k$.
 We convolve $h$ with mollifiers $\chi_\varepsilon$. Then $h_\var=h*\chi_\var$  satisfies (2.2), still with $g=0$.
 But now $h_\var$ is smooth, so this implies $d^kh_\var/d\tau^k=0$; therefore $h_\var$ is a polynomial of degree $<k$. Choosing $\chi_\var d\tau$ to converge weak$^*$ to the Dirac measure, we then obtain that $h=\lim h_\var$ is also a polynomial of degree $<k$.
\end{proof}

If $u$ is a function on an open subset of $\bR^\nu$, we write $\partial_j u$ for its derivative with respect to the $j$'th variable, and if $\rho\in\bR^\nu$, $\partial_\rho$ for $\sum_j\rho_j\partial_j$. 
Given $k\in\bN$, fix a finite $T\subset\bR^\nu$ so that polynomials on $\bR^\nu$ of degree $\le k$ can be recovered from their restrictions to $T$, and for multiindices $(j_1,\ldots,j_\nu)$ choose functions $a_{j_1\ldots j_\nu}\colon T\to\bR$ so that for such polynomials $p$
$$
p(t)=\sum_{j_i\ge 0}\sum_{\rho\in T}a_{j_1\ldots j_\nu}(\rho)p(\rho)t_1^{j_1}\ldots t_\nu^{j_\nu},\qquad t=(t_i)\in\bR^\nu.
$$ 
\begin{lem}Suppose $D\subset\bR^\nu$ is open, $Z$ is a locally convex topological vector space, and $u:D\to Z$ is continuous. 
If $\partial_\rho^lu(t)$ exists and defines a continuos function of $(t,\rho)\in D\times\bR^\nu$ for all $l\le k$, then $u$ is $C^k$ and when $j_1+\ldots+j_\nu=k$
\begin{equation}
\partial_1^{j_1}\ldots\partial_\nu^{j_\nu} u= \sum_{\rho\in T}a_{j_1\ldots j_\nu}(\rho) \partial_\rho^ku.
\end{equation}
\end{lem}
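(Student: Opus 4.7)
The plan is to induct on $k$, reducing to the smooth case via mollification, where (2.3) will follow from elementary calculus combined with the polynomial recovery formula applied to the polynomial $\rho \mapsto \partial_\rho^k u_\var(t)$. The base case $k = 0$ is trivial, as $u$ is given continuous. For the inductive step, assume the conclusion for $k-1$; since the hypothesis with parameter $k$ implies that with $k-1$, the inductive assumption yields $u \in C^{k-1}$.

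I first establish that $u$ is $C^k$ along every line. Fix $t_0 \in D$ and $\rho \in \bR^\nu$, and set $h(\tau) = u(t_0 + \tau\rho)$ on the appropriate interval. A direct computation shows that the $k$-th difference quotient of $h$ at $\tau$ equals $\partial_\rho^k u(t_0 + \tau\rho)$, which is continuous by hypothesis; Lemma 2.1 then yields $h \in C^k$ with $h^{(l)}(\tau) = \partial_\rho^l u(t_0 + \tau\rho)$ for $l \le k$, giving in particular the fundamental theorem of calculus along lines. Let $\chi_\var \ge 0$ be a standard scalar mollifier on $\bR^\nu$, and define $u_\var(t) = \int u(t-r)\chi_\var(r)\,dr$ on $D_\var := \{t : \mathrm{dist}(t,\partial D) > \var\}$. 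Using the FTC along lines together with uniform continuity of $\partial_\rho^l u$ on compacta to pass limits inside the integral, one verifies $\partial_\rho^l u_\var = (\partial_\rho^l u) * \chi_\var$ for $l \le k$, so $\partial_\rho^l u_\var \to \partial_\rho^l u$ uniformly on compact subsets of $D_\var \times \bR^\nu$ as $\var \to 0$. Since $u_\var$ is $C^\infty$, the map $\rho \mapsto \partial_\rho^k u_\var(t)$ is a homogeneous polynomial of degree $k$ whose coefficients are, up to multinomial factors, the partials $\partial^j u_\var(t)$; applying the polynomial recovery formula to it produces
$$\partial_1^{j_1}\cdots\partial_\nu^{j_\nu} u_\var(t) = \sum_{\rho \in T} a_{j_1\ldots j_\nu}(\rho)\, \partial_\rho^k u_\var(t),\qquad |j| = k,$$
with the $a_j$ of the statement (the multinomial factors absorbed into them).

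The main obstacle is to promote this identity from $u_\var$ to $u$ itself. As $\var \to 0$ the right side above converges uniformly on compacta to a continuous function $g_j := \sum_\rho a_j(\rho)\,\partial_\rho^k u$, but this alone does not produce a partial derivative of $u$. To deduce one, write $j = j' + e_m$ with $|j'| = k-1$; by the inductive hypothesis $\partial^{j'} u$ exists and is continuous, so $\partial^{j'} u_\var = (\partial^{j'} u) * \chi_\var \to \partial^{j'} u$ uniformly on compacta. The fundamental theorem for the smooth $u_\var$ reads
$$\partial^{j'} u_\var(t + h e_m) - \partial^{j'} u_\var(t) = \int_0^h \partial^{j} u_\var(t + s e_m)\, ds;$$
passing $\var \to 0$ yields the same identity with $\partial^{j'} u$ on the left and $g_j$ in the integrand, and differentiating in $h$ at $0$ gives $\partial_m \partial^{j'} u = g_j$. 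Hence $\partial^j u$ exists, equals $g_j$, and is continuous; since $g_j$ depends only on the multiindex $j$, mixed partials commute. Finally, having continuous partials of all orders $\le k$—together with the fact stated in the paper that continuous directional derivatives are linear in the direction, so that $du(t,\xi) = \sum_m \xi_m\, \partial_m u(t)$—promotes $u \in C^{k-1}$ to $u \in C^k$ in the paper's sense, completing the induction.
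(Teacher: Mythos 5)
Your proposal is correct and follows essentially the same route as the paper: mollify $u$, apply the polynomial recovery formula to the smooth $u_\varepsilon$, and pass to the limit via the inductive hypothesis that $u\in C^{k-1}$ together with the fundamental theorem of calculus in one coordinate direction to identify $\partial^j u$ with $\sum_{\rho\in T}a_{j}(\rho)\partial_\rho^k u$. The extra details you supply (justifying $\partial_\rho^l u_\varepsilon=(\partial_\rho^l u)*\chi_\varepsilon$ along lines, and the final upgrade from continuous partials to $C^k$ in the paper's sense) only flesh out steps the paper leaves implicit.
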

\begin{proof}Once $u$ is known to be  $C^k$, (2.3) follows if we expand $\partial_t^k$ by the multinomial theorem and collect like terms.
 In general, we convolve $u$ by mollifiers $\chi_\var$; then (2.3) holds with $u$ replaced by $u_\var=\chi_\var *u$. Hence  if we choose $\chi_\var$ to approximate unity,
$$
\partial_1^{j_1}\ldots\partial_\nu^{j_\nu}u_\var= \sum_{\rho\in T}a_{j_1\ldots j_\nu}(\rho) \partial_\rho^k u_\var\\
=\chi_\var* \sum_{\rho\in T}a_{j_1\ldots j_\nu}(\rho) \partial_\rho^ku
\to  \sum_{\rho\in T}a_{j_1\ldots j_\nu}(\rho) \partial_\rho^ku
$$
as $\var\to 0$, locally uniformly on $D$. This last function, call it $v$, is continuous. Arguing by induction, we can assume that $u$ is $C^{k-1}$. If, say, $j_\nu\ge 1$,
$$
\partial_1^{j_1}\ldots\partial_\nu^{j_\nu-1}u\Big|_{(t_1,\ldots,t_{\nu-1},a)}^{(t_1,\ldots,t_{\nu-1},b)}
=\lim_{\var\to 0} \partial_1^{j_1}\ldots\partial_\nu^{j_\nu-1}u_\var\Big|_{(t_1,\ldots,t_{\nu-1},a)}^{(t_1,\ldots,t_{\nu-1},b)}=\int_a^b v(t)dt_\nu.
$$
This shows that $\partial_1^{j_1}\ldots\partial_\nu^{j_\nu-1}u$ has a continuous $\partial_\nu$ derivative, and the lemma follows.  
\end{proof}

\section{Differential forms}

Let $V,W,Z$ be locally convex spaces and $\Omega\subset V$ open, as before.
Given $r=0,1,2,\ldots$, a $W$ (resp.~$\Hom (W,Z)$) valued $r$--form on $\Omega$ is a map
$$
A\colon \Omega\times V^{\oplus r}\to W\qquad (\text{resp. }\Hom (W,Z))
$$
that is alternating $r$--linear on each $\{v\}\times V^{\oplus r}$.
We write $A\in C_r^k (\Omega,W)$ if the $W$ valued map $A$ above is $C^k$, and $A\in C_r^k (\Omega,\Hom (W,Z))$ if the map
$$
\Omega\times V^{\oplus r}\times W\ni (v,\xi_1,\ldots,\xi_r,w)\mapsto A(v,\xi_1,\ldots,\xi_r) w\in Z
$$
is $C^k$.

Forms can be pulled back along $C^1$ maps.
If $V'$ is another locally convex space, $F\colon\Omega\to V'$ is a $C^1$ map, and $B$ is a $W$ or $\Hom (W,Z)$ valued $r$--form in a neighborhood of $F(\Omega)\subset V'$, the pullback $F^*B$ is defined by
$$
(F^*B)(v,\xi_1,\ldots,\xi_r)=B(F(v),F_*|_v\xi_1,\ldots,F_*|_v\xi_r).
$$
If $B$ is $C^k$ and $F$ is $C^{k+1}$, then $F^*B$ is $C^k$.

Given a $C^1$ $r$--form $A$, its exterior derivative is the $(r+1)$--form $dA$,
\begin{equation}
dA(\cdot,\xi_0,\ldots,\xi_r)=\sum_j (-1)^j \xi_j A(\cdot,\xi_0,\ldots\xi_{j-1},\xi_{j+1},\ldots,\xi_r).
\end{equation}
The following lemma, if predictable, is not entirely obvious:

\begin{lem}Pullback and exterior differentiation commute.
That is, if $F\in C^2(\Omega,V')$ and $B$ is a $W$ or $\Hom (W,Z)$ valued $r$--form of class $C^1$ in a neighborhood of $F(\Omega)$, then
\begin{equation}
F^* dB=dF^*B.
\end{equation}
\end{lem}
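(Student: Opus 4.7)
I would expand both sides of (3.2) and match terms. Writing $\eta_i=F_*|_v\xi_i=dF(v,\xi_i)$, the left-hand side is, directly from the definitions of pullback and of $d$,
\[
(F^*dB)(v,\xi_0,\ldots,\xi_r)=(dB)(F(v),\eta_0,\ldots,\eta_r)=\sum_{j=0}^r(-1)^j\bigl(\eta_j B(\cdot,\eta_0,\ldots,\widehat{\eta_j},\ldots,\eta_r)\bigr)\big|_{F(v)},
\]
where $\eta_j$ denotes a directional derivative in the base-point argument of $B$.

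For the right-hand side I would apply the chain rule to differentiate
\[
w\mapsto (F^*B)(w,\xi_0,\ldots,\widehat{\xi_j},\ldots,\xi_r)=B\bigl(F(w),dF(w,\xi_0),\ldots,\widehat{dF(w,\xi_j)},\ldots,dF(w,\xi_r)\bigr)
\]
at $w=v$ in direction $\xi_j$. Since $B$ is $C^1$ jointly and $F$ is $C^2$ (so each $w\mapsto dF(w,\xi_i)$ is $C^1$), the product/chain rule recorded at the end of Section 2 produces one \emph{main} term, where the derivative falls on $F(w)$ and contributes $dF(v,\xi_j)=\eta_j$ applied to $B$ in its base-point slot, and for each $i\neq j$ one \emph{cross} term, in which $\eta_i$ is replaced by $d^2F(v,\xi_j,\xi_i)$ in the slot previously occupied by $\eta_i$. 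Summed over $j$ with signs $(-1)^j$, the main terms exactly reproduce the displayed expansion of $(F^*dB)$, so the proof reduces to showing that the cross terms cancel.

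To see the cancellation I would pair, for each $i<j$, the cross term where the $\xi_j$-derivative lands on $dF(w,\xi_i)$ (sign $(-1)^j$, vector $\zeta=d^2F(v,\xi_j,\xi_i)$ placed in slot $i$ of a length-$r$ tuple, $\eta_j$ omitted) with the one where the $\xi_i$-derivative lands on $dF(w,\xi_j)$ (sign $(-1)^i$, vector $d^2F(v,\xi_i,\xi_j)$ placed in slot $j-1$, $\eta_i$ omitted). Symmetry of the second differential of a $C^2$ map---standard in finite dimensions and reducible here to that case by composing with continuous linear functionals on $V'$ and restricting to the plane spanned by $\xi_i,\xi_j$---makes the two inserted vectors equal. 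The two tuples of vector arguments of $B$ agree outside the positions $i,\ldots,j-1$, and on that block differ by the cyclic shift moving $\zeta$ from position $i$ to position $j-1$, a permutation of sign $(-1)^{j-i-1}$. Since $B$ is alternating, the two contributions sum to
\[
\bigl((-1)^j+(-1)^i(-1)^{j-i-1}\bigr)B(\cdots)=\bigl((-1)^j+(-1)^{j-1}\bigr)B(\cdots)=0.
\]

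The only genuine obstacle is keeping the sign straight in the cyclic-shift comparison; the analytic ingredients---symmetry of $d^2F$ and the chain/product rule for $C^1$ maps into locally convex spaces---are either classical or have just been developed in Section 2.
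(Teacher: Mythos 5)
Your proof is correct, but it takes a genuinely different route from the paper's. You carry out the direct computation: expand both sides via the chain rule, match the main terms, and cancel the cross terms in pairs using the symmetry of $d^2F$ together with the fact that $B$ is alternating; your sign bookkeeping (the cyclic shift of length $j-i$ contributing $(-1)^{j-i-1}$, so that $(-1)^i(-1)^{j-i-1}=-(-1)^j$) checks out. The paper avoids this bookkeeping entirely by a reduction to finite dimensions: it composes with continuous linear functionals to reduce to $W=\bR$, observes that both sides of (3.2) at $v$ depend only on the $2$-jet of $F$ at $v$, replaces $F$ by its second-order Taylor polynomial $G$ (a polynomial map, which sends the finite-dimensional span of $v,\xi_0,\ldots,\xi_r$ into a finite-dimensional subspace of $V'$), and then quotes the classical finite-dimensional identity $G^*dB=dG^*B$; a separate correspondence between $\Hom(W,Z)$-valued forms on $\Omega$ and $Z$-valued forms on $\Omega\times W$ handles the operator-valued case. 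Your approach buys a self-contained argument that treats $W$-valued and $\Hom(W,Z)$-valued forms uniformly (the identical computation applies once the chain rule for $\Hom$-valued maps from Section 2 is invoked), at the price of verifying by hand the symmetry of $d^2F$ and the combinatorial cancellation --- exactly the two points the paper outsources to the finite-dimensional theorem. If you write this up, do make explicit (i) that the chain rule applies to $w\mapsto\bigl(F(w),dF(w,\xi_0),\ldots\bigr)$ because each $w\mapsto dF(w,\xi_i)$ is $C^1$ when $F$ is $C^2$, and (ii) the reduction of the symmetry of $d^2F$ to Schwarz's theorem by restricting to the affine plane $v+s\xi_i+t\xi_j$ and composing with linear functionals; these are the only places where the $C^2$ hypothesis on $F$ is used.
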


\begin{proof}First consider a $W$ valued form $B$.
Continuous linear forms $l\colon W\to\bR$ commute with pullback and $d$; since two vectors in $W$ must be equal if each such $l$ takes the same values on them, it suffices to prove when $W=\bR$.

Given $v\in\Omega$, let us say that maps $F,G\colon\Omega\to V'$ agree to order 0 at $v$ if $F(v)=G(v)$, and inductively, that they agree to order $k=1,2,\ldots$ at $v$ if $dF$ and $dG$ agree to order $k-1$ at all points of $T_v\Omega$.
Thus agreement to order $k$ makes sense if $F,G$ are $C^k$.
If now $F$ and $G$ agree to order 2 at $v$, then $F^*dB$, $G^*dB$ on the one hand, and $dF^*B$, $dG^*B$ on the other agree at $v$.
Hence to prove (3.2) at $v$, we are free to replace $F$ by a $G$ that agrees with it to order 2 at $v$.
We take $G$ to be the second order Taylor polynomial of $F$ at $v$, that is obtained by defining
$$
d^2 F(v,\xi,\eta)=\lim_{t\to 0}\ {dF(v+t\eta,\xi)-dF(v,\xi)\over t},
$$
and letting
$$
G(v+\xi)=F(v)+dF (v,\xi)+{1\over 2}\ d^2 F(v,\xi,\xi).
$$
The advantage of working with a polynomial map like $G$ is that it maps a finite dimensional space into a finite dimensional space, and so we can justify
$$
(G^* dB) (v,\xi_0,\ldots,\xi_r)=(d G^* B) (v,\xi_0,\ldots,\xi_r)
$$
by invoking the corresponding finite dimensional result.

The case of $\Hom (W,Z)$ valued forms $B$ can be reduced to what we have already proved by noting a $1-1$ correspondence between $B\in C_r^k(\Omega,\Hom (W,Z))$ and $b\in C_r^k (\Omega\times W,Z)$ that are linear in $w$.
Writing $\pi\colon\Omega\times W\to\Omega$ for the projection, the correspondence is
$$
b(v,w,\eta_1,\ldots,\eta_r)=B(v,\pi_*\eta_1,\ldots,\pi_*\eta_r)(w),\qquad (v,w)\in\Omega\times W,\ \eta_1,\ldots,\eta_r\in V\times W.
$$
Our $F$ induces a map $f=F\times\id_W\colon \Omega\times W\to V'\times W$, and then the $\Hom (W,Z)$ valued part of Lemma 3.1 follows from $f^*db=df^*b$.
\end{proof}

Wedge products can also be defined in infinite dimensional spaces.
All we need is the product of two 1--forms.
Suppose $X$ is yet another locally convex space, and $B,A$ are $\Hom(W,Z)$ resp.~$\Hom (Z,X)$ valued 1--forms on $\Omega$.
Then $A\wedge B$ is a $\Hom (W,X)$ valued 2--form,
$$
(A\wedge B) (v,\xi,\eta)=A(v,\xi) B(v,\eta)-A(v,\eta) B(v,\xi),\qquad v\in\Omega,\ \xi,\eta\in V.
$$

\section{Connection and curvature}

With notation as before, we define a connection on a bundle $\Omega\times W\to\Omega$ as a map $D\colon C^1(\Omega,W)\to C_1(\Omega,W)$ that can be written with an $A\in C_1 (\Omega,\Hom (W,W))$ as
\begin{equation}
D\varphi (v,\xi)=d\varphi (v,\xi)+A (v,\xi)\varphi(v),\qquad \varphi\in C^1 (\Omega,W).
\end{equation}
An alternative notation for $D\varphi (\cdot,\xi)$ is $D_\xi\varphi$.
(4.1) can be abbreviated to $D=d+A$.
The connection is $C^k$ if its connection form $A$ is;
then $D$ maps $C^{k+1} (\Omega,W)$ to $C_1^k (\Omega,W)$.

The curvature of the connection $D$, assumed to be $C^1$, is the $\Hom (W,W)$ valued 2--form
\begin{equation}
R=dA+A\wedge A,
\end{equation}
that can also be defined by
\begin{equation}
R(\cdot,\xi,\eta)\varphi=D_\xi D_\eta\varphi-D_\eta D_\xi\varphi,\qquad \xi,\eta\in V,\quad\varphi\in C^2 (\Omega,W).
\end{equation}

We record two transformation formulae for $A$ and $R$.
Denote by $\text{GL}(W)$ the group of invertible elements of $\Hom(W,W)$.

\begin{lem}Let $D$ be a connection on a bundle $\Omega\times W\to\Omega$, of class $C^1$, with connection form $A$ and curvature $R$.

(a)\ Suppose that $\gamma\colon\Omega\to GL(W)$ and its inverse $\gamma^{-1}\colon\Omega\to GL(W)$, viewed as maps $\Omega\to\Hom (W,W)$, are $C^2$.
Define a connection $D^\gamma$ on $\Omega\times W\to\Omega$ by
\begin{equation}
D_\xi^\gamma\varphi=\gamma^{-1} D_\xi (\gamma\varphi),\qquad \xi\in V,\quad\varphi\in C^1 (\Omega,W).
\end{equation}
Then the connection form $A^\gamma$ and the curvature $R^\gamma$ of $D^\gamma$ satisfy
$$
A^\gamma=\gamma^{-1} d\gamma+\gamma^{-1}A\gamma,\qquad R^\gamma=\gamma^{-1} R\gamma.
$$

(b)\ Suppose $V_0$ is a locally convex space, $\Omega_0\subset V_0$ is open, and $F\colon\Omega_0\to\Omega$ is $C^2$.
Define a connection $D^F$ on $\Omega_0\times W\to\Omega_0$ by its connection form $A^F=F^*A$.
The curvature $R^F$ of $A^F$ satisfies
\begin{equation}
R^F=F^*R.
\end{equation} 
\end{lem}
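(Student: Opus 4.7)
The plan is to treat (a) and (b) separately, both by direct calculation from the definitions, with the only substantive inputs being the Leibniz rule for $d(\gamma\varphi)$ from Section 2, the alternative curvature formula (4.3), and Lemma 3.1.

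For (a), I first compute $A^\gamma$. Unpacking $D^\gamma_\xi\varphi=\gamma^{-1}D_\xi(\gamma\varphi)$ and applying the product rule $d(\gamma\varphi)(\cdot,\xi)=d\gamma(\cdot,\xi)\varphi+\gamma\,d\varphi(\cdot,\xi)$ yields
$$
D^\gamma_\xi\varphi \;=\; d\varphi(\cdot,\xi)+\bigl(\gamma^{-1}d\gamma+\gamma^{-1}A\gamma\bigr)(\cdot,\xi)\,\varphi,
$$
from which $A^\gamma=\gamma^{-1}d\gamma+\gamma^{-1}A\gamma$ is immediate. The hypothesis $\gamma,\gamma^{-1}\in C^2$ ensures $A^\gamma\in C_1^1$, so $R^\gamma$ is defined by (4.2). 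For the curvature itself I would avoid computing $dA^\gamma+A^\gamma\wedge A^\gamma$ by hand and instead use (4.3). From $\gamma D^\gamma_\eta\varphi=D_\eta(\gamma\varphi)$ one iterates to $D^\gamma_\xi D^\gamma_\eta\varphi=\gamma^{-1}D_\xi D_\eta(\gamma\varphi)$, and antisymmetrizing in $\xi,\eta$ gives
$$
R^\gamma(\cdot,\xi,\eta)\varphi \;=\; \gamma^{-1}\bigl(D_\xi D_\eta-D_\eta D_\xi\bigr)(\gamma\varphi) \;=\; \gamma^{-1}R(\cdot,\xi,\eta)\gamma\varphi,
$$
which is the claim. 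The application of (4.3) is legitimate because $\gamma\in C^2$, so $\gamma\varphi\in C^2$ whenever $\varphi\in C^2$.

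For (b), I compute from (4.2): $R^F=d(F^*A)+(F^*A)\wedge(F^*A)$. The first term equals $F^*(dA)$ by Lemma 3.1, whose hypotheses ($F\in C^2$ and $A$ a $C^1$ form) are exactly what is given. The second term equals $F^*(A\wedge A)$ essentially by definition, since both sides evaluated at $(v,\xi,\eta)$ unfold to $A(F(v),F_*\xi)A(F(v),F_*\eta)-A(F(v),F_*\eta)A(F(v),F_*\xi)$. Adding and factoring out the pullback gives $R^F=F^*(dA+A\wedge A)=F^*R$.

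The computations themselves are short; the main point to watch is regularity, which is exactly why $\gamma,\gamma^{-1}$ and $F$ are assumed $C^2$ rather than merely $C^1$: in (a) one needs enough smoothness to form $A^\gamma\in C^1$ and to apply (4.3) to $\gamma\varphi$, and in (b) Lemma 3.1 requires $F$ to be one degree smoother than $A$. Beyond this bookkeeping there is no serious obstacle, since the analytic content—the product rule for $\Hom$-valued maps and the commutation of $d$ with pullback—has already been established in Sections 2 and 3.
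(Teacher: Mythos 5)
Your proof is correct and follows the same route as the paper, whose entire proof is the one-line remark that (a) follows from (4.1) and (4.3) and (b) from (4.2) together with Lemma 3.1; you have simply supplied the computations that remark leaves implicit. The regularity bookkeeping you add (why $\gamma,\gamma^{-1},F\in C^2$ is needed) is accurate and consistent with the paper's hypotheses.
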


Because of the chain rule, $A^F=F^*A$ implies for $\varphi\in C^1 (\Omega,W)$
\begin{equation}
D^F (F^*\varphi) (v,\xi)=D\varphi (F(v),F_*|_v\xi),
\qquad v\in\Omega_0,\quad\xi\in V_0.
\end{equation}
If $F$ is a diffeomorphism, then conversely, (4.6) implies $A^F=F^*A$.

\begin{proof}(a) follows from (4.1) and (4.3), while (b) follows from (4.2) in conjunction with Lemma 3.1.
\end{proof}

\section{Riemannian metrics and their curvature}

A Riemannian metric on $\Omega$ is the specification of a positive definite symmetric bilinear form on each tangent space $T_v\Omega$; in other words, a map $g\colon\Omega\times V\times V\to\bR$ with $g_v=g(v,\cdot,\cdot)$ positive definite symmetric bilinear form for all $v\in\Omega$.
The metric is $C^k$ if the map $g$ is.
Clearly, $g$ is uniquely determined if we know the length $|\xi|_v=g(v,\xi,\xi)^{1/2}$ of each tangent vector $\xi\in T_v\Omega\approx V$.
Given a $C^1$ Riemannian metric $g$ on $\Omega$, a Levi--Civita connection $D$ on $T\Omega\approx \Omega\times V\to\Omega$ is a connection with connection form $A$ that is compatible with the metric and has no torsion:\  for $\xi,\eta\in V$ and $\varphi,\psi\in C^1 (\Omega,V)$
\begin{gather}
\xi g(\cdot,\varphi, \psi)=g(\cdot,D_\xi\varphi, \psi)+g(\cdot,\varphi, D_\xi\psi),\\
D_\xi\eta=D_\eta\xi\qquad\text{or equivalently}, \quad A(v,\xi)\eta=A(v,\eta)\xi\text{ for }v\in\Omega.
\end{gather}
In the first formula in (5.2) $\xi,\eta$ represent the corresponding constant vector fields $\Omega\to V$. An alternative of (5.1) is a formula for $dg:T(\Omega\times V\times V)\approx \Omega\times V^{\oplus 5}\to\bR$:
\begin{equation}
dg(v,\eta,\zeta;\xi,\lambda,\mu)=g\big(v,A(v,\xi)\eta,\zeta\big)+g\big(v,\eta,A(v,\xi)\zeta\big)+g(v,\lambda,\zeta)+g(v,\eta,\mu).
\end{equation}
That (5.1) and (5.3) are equivalent follows from the chain rule. (5.3) shows that if the connection is $C^k$, then $g$ is $C^{k+1}$.

In general, a Levi--Civita connection may not exist, but if it does, it is unique.
Indeed, applying (5.1) with constant vector fields $\varphi\equiv\eta$, $\psi\equiv\zeta$ (or using (5.3)), the connection form $A$ will satisfy
$$
\xi g(\cdot,\eta,\zeta)=g(\cdot,A(\cdot,\xi)\eta,\zeta)+g(\cdot,\eta,A(\cdot,\xi)\zeta).
$$ 
Upon permuting $\xi,\eta,\zeta$ cyclically we obtain three equations for six unknowns, namely $g(v,A(v,\xi)\eta,\zeta)$ and permutations.
But (5.2) reduces the number of unknowns to three, which are then  determined by the three equations we have.

Whenever a metric $g$ admits a Levi--Civita connection $D$ of class $C^1$, by the curvature of $g$ we mean the curvature $R$ of $D$, given in (4.2) or (4.3). The Riemann tensor $\cR(v,\xi,\eta,\zeta,\theta)=g\big(v,R(v,\xi,\eta)\zeta,\theta)\big)$ has the usual algebraic symmetry properties, which imply that
$$
K(v,\xi,\eta)=\cR(v,\xi,\eta,\eta,\xi),
$$
if $\xi,\eta$ are orthonormal, depends only on the plane $P\subset T_v\Omega$ spanned by $\xi,\eta$. This is the sectional curvature along $P$.

Consider now another locally convex space $V'$, an open $\Omega'\subset V'$ endowed with a Riemannian metric $g'$, and as before, when $\eta\in T_w \Omega'\approx V'$ set $|\eta|'_w=g'_w (\eta,\eta)^{1/2}$.
A $C^k$ isometry $F\colon\Omega\to\Omega'$ is a $C^k$ diffeomorphism on an open subset of $\Omega'$ such that
$$
|F_* \xi|'_{F(v)}= |\xi|_v, \qquad v\in\Omega,\quad\xi\in T_v\Omega.
$$

\begin{lem}Suppose that the metrics $g,g'$ on $\Omega,\Omega'$ admit Levi--Civita connections $D,D'$ of class $C^1$.
Let $F\colon\Omega\to\Omega'$ be a $C^2$ isometry between $g$ and $g'$, and view $F_*$ as a $\Hom(V,V')$ valued function on $\Omega$.
Then the connection and curvature forms $A,A'$ and $R,R'$ of $D,D'$ satisfy for $v\in\Omega$, $\xi,\eta,\zeta\in T_v\Omega$
\begin{gather}
F_*|_v^{-1}A' \big(F(v),F_*\xi\big) F_*|_v+ F_*|_v^{-1}(\xi F_*)|_v=A(v,\xi)\\
 R' \big(F(v), F_*\xi, F_*\eta\big) F_*\zeta=F_* \big(R(v,\xi,\eta)\zeta\big).
\end{gather}
\end{lem}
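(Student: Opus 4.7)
The plan is to recognize the right--hand side of (5.4) as the connection form of a Levi--Civita connection $\tilde D$ for $g$; uniqueness will then give $\tilde D=D$, which is (5.4), and (5.5) will follow by combining Lemma~4.1(b) with the gauge transformation rule of Lemma~4.1(a). Concretely, set
\[
\tilde A(v,\xi)=F_*|_v^{-1}A'\bigl(F(v),F_*\xi\bigr)F_*|_v+F_*|_v^{-1}(\xi F_*)|_v\in\Hom(V,V),
\]
define $\tilde D=d+\tilde A$, and aim to verify that $\tilde D$ is torsion free and compatible with $g$.

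For compatibility, I would polarize the isometry condition to $g(v,\eta,\zeta)=g'(F(v),F_*|_v\eta,F_*|_v\zeta)$, so that $g(\cdot,\varphi,\psi)=g'(F(\cdot),F_*\varphi,F_*\psi)$ for any $\varphi,\psi\in C^1(\Omega,V)$. Differentiating this identity in direction $\xi$ by applying (5.3) to $g'$ at the point $(F(v),F_*|_v\varphi(v),F_*|_v\psi(v))$ in the tangent direction $(F_*\xi,\xi(F_*\varphi)|_v,\xi(F_*\psi)|_v)$, and using $\xi(F_*\varphi)|_v=(\xi F_*)|_v\varphi(v)+F_*|_v d\varphi(v,\xi)$, the four resulting terms regroup via the isometry into $g(v,\tilde D_\xi\varphi,\psi)+g(v,\varphi,\tilde D_\xi\psi)$. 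For torsion, the first summand of $\tilde A(v,\xi)\eta$ is symmetric in $\xi,\eta$ because $A'$ is, and the second summand equals $F_*|_v^{-1}d^2F(v,\xi,\eta)$, which is symmetric in $\xi,\eta$ because $F$ is $C^2$. Uniqueness of the Levi--Civita connection then yields $\tilde D=D$, i.e.\ (5.4).

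For the curvature identity (5.5), I would apply Lemma~4.1(b) to the connection $D^F=d+F^*A'$ on $\Omega\times V'\to\Omega$, obtaining curvature $F^*R'$. The passage from $D^F$ to $\tilde D$ is a gauge change by $F_*\colon V\to V'$; the proof of Lemma~4.1(a), which is direct substitution into (4.1) and (4.3), applies verbatim (nothing in it actually requires source and target bundles to have the same fiber), and yields the curvature of $\tilde D$ as $F_*|_v^{-1}R'(F(v),F_*\xi,F_*\eta)F_*|_v$. Since $\tilde D=D$, composing with $F_*|_v$ on the left and evaluating on $\zeta$ gives (5.5).

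The main obstacle is the chain--rule bookkeeping in the compatibility step: one has to confirm that the terms produced by applying $\xi$ to $F_*\varphi$ and $F_*\psi$ inside $g'$ recombine to give exactly $\tilde D_\xi\varphi$ and $\tilde D_\xi\psi$, with the gauge correction $F_*|_v^{-1}(\xi F_*)|_v$ absorbing precisely the Leibniz contributions of the differential of $F_*$. Once that is settled, torsion--freeness and the fiber--change extension of Lemma~4.1(a) are essentially formal.
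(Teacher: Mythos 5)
Your proposal is correct and follows essentially the same route as the paper: define the right--hand side of (5.4) as a candidate connection form, check torsion--freeness and metric compatibility, invoke uniqueness of the Levi--Civita connection, and then obtain (5.5) by combining the pullback and gauge transformation formulae of Lemma~4.1. The only cosmetic differences are that you verify compatibility by differentiating the polarized isometry identity directly via (5.3) and the chain rule, where the paper routes the same computation through an auxiliary field $\varphi'$ and formulae (4.4), (4.6), and that you extend Lemma~4.1(a) to a fiber--changing gauge map where the paper simply assumes $V=V'$ at the outset.
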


\begin{proof}We can assume that $V=V'$ and $\Omega'=F(\Omega)$.
Taking (5.4) as the definition of a $\Hom (V,V)$ valued 1--form $A$, we first show that the connection $\nabla=d+A$ is then Levi--Civita.
That $\nabla$ has no torsion is obvious, since $(\xi F_*)\eta=\xi\eta F$ is symmetric in $\xi,\eta$.
To see that $d+A$ is compatible with $g$, for any $\varphi\in C^1 (\Omega,V)$ define $\varphi'\in C^1 (\Omega',V')$ by
$$
F_*|_v\varphi(v)=\varphi' (F(v)),\qquad v\in\Omega.
$$
Since the passage from $D', A'$ to $\nabla,A$ is a combination of the transformations in Lemma 4.1,
$$
(D'_{F_*\xi}\varphi') (F(v))=(\nabla_\xi\varphi)(v)
$$
by (4.4), (4.6).
Hence 
\begin{eqnarray*}
\xi g(\cdot,\varphi,\psi)=(F_*\xi) g'(F,\varphi',\psi')&=&g' (F,D'_{F_*\xi}\varphi',\psi')+g' (F,\varphi', D'_{F_*\xi}\psi')\\
&=& g(\cdot, \nabla_\xi \varphi,\psi) +g(\cdot,\varphi, \nabla_\xi \psi),
\end{eqnarray*}
and $\nabla=d+A$ is indeed Levi--Civita; in other words, $A$ given in (5.4) is the connection form of $D$.

At this point (5.5) follows by putting together the two transformation formulae of the curvature in Lemma 4.1.
\end{proof}

\begin{cor}Suppose that metrics on $\Omega,\Omega'$ admit Levi--Civita connections of class $C^k$, $k=0,1,\ldots$. Then any $C^2$ isometry $\Omega\to\Omega'$ is automatically $C^{k+2}$.
\end{cor}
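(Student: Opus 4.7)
The plan is to extract from formula (5.4) of Lemma 5.1 a self-contained identity expressing the derivative of $F_*$ in terms of $F,F_*,A,A'$, and then to bootstrap the regularity of $F$ one order at a time. Multiplying (5.4) through by $F_*|_v$ on the left and rearranging yields the key identity
$$
(\xi F_*)|_v = F_*|_v\, A(v,\xi) - A'\bigl(F(v),F_*|_v\xi\bigr) F_*|_v\in \Hom(V,V'),
$$
valid for $v\in\Omega$, $\xi\in V$. Here the left-hand side is, by definition, $dF_*(v,\xi)$ viewed as a $\Hom(V,V')$-valued map on $\Omega\times V$.

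Next, using the identification from Section~2 that $F\in C^{m+1}(\Omega,V')$ is equivalent to $F_*\in C^m(\Omega,\Hom(V,V'))$, I would prove by induction the following step: if $1\le m\le k$ and $F\in C^{m+1}$, then $F\in C^{m+2}$. To do so, I apply both sides of the identity above to an arbitrary $u\in V$, obtaining
$$
dF_*(v,\xi)u \;=\; F_*|_v\bigl(A(v,\xi)u\bigr)\;-\;A'\bigl(F(v),F_*|_v\xi\bigr)\bigl(F_*|_v u\bigr).
$$
By hypothesis, the maps $(v,\xi,u)\mapsto A(v,\xi)u$ and $(w,\xi',u')\mapsto A'(w,\xi')u'$ are $C^k$, the map $F$ is $C^{m+1}$, and $(v,w)\mapsto F_*|_v w$ is $C^m$. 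Applying the chain rule from Section~2 (and the product rule for $\Hom$-valued compositions) to the right-hand side viewed as a function of $(v,\xi,u)\in\Omega\times V\times V$, one obtains a map of class $C^{\min(m,k)}=C^m$. By the definition of $C^m$-smoothness for $\Hom(V,V')$-valued maps, this means exactly that $dF_*$ is $C^m$, hence $F_*\in C^{m+1}$ and $F\in C^{m+2}$.

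Starting from the hypothesis $F\in C^2$ (the base case $m=1$), iterating this step $k$ times produces $F\in C^3,C^4,\ldots,C^{k+2}$, which is the conclusion.

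The argument is algebraically trivial once (5.4) is in hand; the only real obstacle is bureaucratic. One must carefully track joint smoothness through a composition in which some factors (namely $F_*$, $A$, $A'$) are $\Hom$-valued maps, using consistently the Section~2 convention that $C^k$-smoothness of such a map means $C^k$-smoothness of its $Z$-valued evaluation. Provided one does so, the induction closes at exactly the right place, since each iteration consumes one unit of the available $C^k$-regularity of the connection forms.
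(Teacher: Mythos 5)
Your proposal is correct and is essentially the paper's own argument: both hinge on rearranging (5.4) to express $(\xi F_*)|_v$ in terms of $F$, $F_*$, $A$, $A'$, and on the identification that $C^m$-smoothness of the $\Hom(V,V')$-valued map $F_*$ is equivalent to $C^{m+1}$-smoothness of $F$. The only cosmetic difference is the bookkeeping of the bootstrap --- the paper runs an induction on the regularity $k$ of the connections, while you fix $k$ and iterate on the known regularity $m$ of $F$ --- and the two unroll to the same chain of deductions.
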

\begin{proof} We prove by induction on $k$, the base case $k=0$ being vacuous. Suppose the statement is true for $k-1\ge 0$ instead of $k$. Then in (5.4) the first term on the left and the term on the right, as functions of $v,\xi$, are in $C^k(\Omega\times V,\Hom(V,V))$. It follows that so is $\xi F_*|_v$, which means $F$ is $C^{k+2}$.
\end{proof}

\section{Geodesics}

Consider a connection $D$ on a bundle $\Omega\times W\to \Omega$, with connection form $A$.
Given an interval $I\subset\bR$ and a $C^1$ curve $x\colon I\to\Omega$, by a parallel (or horizontal) lift of $x$ we mean a $C^1$ map $\xi\colon I\to W$ such that
\begin{equation}
{d\xi\over dt}+ A\big(x,{dx\over dt}\big)\xi=0.
\end{equation}
If there are other connections as well in play, we will specify ``$D$--parallel lift''.

When $W$ is a Banach space, the linear differential equation (6.1) can be solved with any initial condition $\xi (t_0)=\xi_0\in W$ $(t_0\in I)$, and uniquely at that; but for more general $W$ there is no guarantee that a solution exists or that it is unique, even if $A$ and $x$ are $C^\infty$.
Nevertheless, if it happens that parallel lifts $\xi$ of $x$ exist and are unique for all initial conditions $\xi(t_0)=\xi_0\in W$, we can define parallel transport from $x(t_0)$ to $x(t_1)$ along $x$: this is the map $W\ni\xi(t_0)\mapsto\xi (t_1)\in W$.

\begin{lem}Suppose  $x_n\in C^1( I,\Omega)$ converge in the $C^1$ topology to $x\colon I\to\Omega$, and their parallel lifts  $\xi_n\in C^1(I,W)$ converge uniformly to $\xi\colon I\to W$. Then $\xi$ is $C^1$, and a parallel lift of $x$.
\end{lem}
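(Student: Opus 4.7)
The plan is to pass to the limit, as $n\to\infty$, in the ODE (6.1) satisfied by each $\xi_n$, namely
$$
\frac{d\xi_n}{dt}(t) = -A\Bigl(x_n(t),\,\frac{dx_n}{dt}(t)\Bigr)\xi_n(t),\qquad t\in I,
$$
to identify a uniform limit $\eta$ of the right-hand side on compact subsets of $I$, and then recover $\xi$ as the integral of $\eta$. The hypotheses deliver uniform convergence of $x_n\to x$, $dx_n/dt\to dx/dt$, and $\xi_n\to\xi$ on every compact $K\subset I$, so writing $h_n(t)=(x_n(t),dx_n/dt(t),\xi_n(t))$ and $h(t)=(x(t),dx/dt(t),\xi(t))$, the maps $h_n\colon K\to\Omega\times V\times W$ converge uniformly to $h$. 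Since the connection form $A$ lies in $C_1(\Omega,\Hom(W,W))$, the map $\Phi\colon(v,u,w)\mapsto A(v,u)w$ is by definition continuous from $\Omega\times V\times W$ to $W$. A brief subsequence argument on the compact metric space $K$ then upgrades the uniform convergence $h_n\to h$ together with pointwise continuity of $\Phi$ to uniform convergence $\Phi\circ h_n\to\Phi\circ h$ on $K$: if it failed, I could extract $t_k\to t_\infty\in K$ and $n_k\to\infty$ with $p(\Phi(h_{n_k}(t_k))-\Phi(h(t_k)))\ge\varepsilon$ for some continuous seminorm $p$ on $W$, but $h_{n_k}(t_k)\to h(t_\infty)$ and $h(t_k)\to h(t_\infty)$ force both terms to approach $\Phi(h(t_\infty))$, a contradiction. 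So $\eta_n:=-\Phi\circ h_n$ converges uniformly on $K$ to the continuous function $\eta:=-\Phi\circ h$.

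Next I would upgrade $\xi$ to $C^1$ via the Riemann integral recalled at the end of Section~2. For fixed $t_0\in I$, each $\xi_n$ is $C^1$ with $d\xi_n/dt=\eta_n$, so
$$
\xi_n(t) = \xi_n(t_0) + \int_{t_0}^t \eta_n(s)\,ds, \qquad t\in I.
$$
Uniform convergence $\eta_n\to\eta$ on the compact interval joining $t_0$ to $t$ commutes with Riemann summation, so letting $n\to\infty$ yields
$$
\xi(t) = \xi(t_0) + \int_{t_0}^t \eta(s)\,ds.
$$
Continuity of $\eta$ then gives $\xi\in C^1(I,W)$ with $d\xi/dt=\eta$. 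By construction of $\eta$ this says $d\xi/dt + A(x,dx/dt)\xi=0$, which is exactly (6.1), so $\xi$ is a parallel lift of $x$.

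The hard part is really the uniform passage to the limit inside $A$. The subtlety, as against the Banach case, is that $A$ is not asserted continuous in any operator topology on $\Hom(W,W)$---only the joint map $\Phi(v,u,w)=A(v,u)w$ is guaranteed continuous. Sequential compactness of $K\subset\bR$ is what converts that pointwise continuity into the uniform convergence needed; once that is in hand, the rest of the argument is routine bookkeeping with Riemann integrals.
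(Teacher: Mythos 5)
Your proof is correct and follows essentially the same route as the paper's: integrate the ODE for $\xi_n$ over a compact subinterval, pass to the limit in the Riemann integral, and read off that $\xi$ is $C^1$ and satisfies (6.1). The only difference is that you spell out the compactness/subsequence argument showing $A(x_n,\dot x_n)\xi_n\to A(x,\dot x)\xi$ uniformly, a step the paper's one-line proof leaves implicit.
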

\begin{proof}For $\alpha,\beta\in I$ 
$$
\xi\big|_\alpha^\beta=\lim_n\int_\alpha^\beta\frac{d\xi_n}{dt}dt=-\lim_n\int_\alpha^\beta A\big(x_n,\frac{dx_n}{dt}\big)\xi_n\,dt=-\int_\alpha^\beta A\big(x,\frac{dx}{dt}\big)\xi \,dt.
$$
This shows that $\xi$ is continuously differentiable and satisfies (6.1).
\end{proof}

Consider next a Riemannian metric $g$ on $\Omega\subset V$ admitting a Levi--Civita connection $D=d+A$.
A $C^2$ curve $x\colon I\to\Omega$ is a geodesic if $dx/dt$ is a parallel lift of $x$, i.e.
\begin{equation}
{d^2 x\over dt^2} +A\big(x, {dx\over dt}\big) {dx\over dt}=0.
\end{equation}
Again, neither existence nor uniqueness is guaranteed for geodesics, but regularity is:

\begin{lem}If the Levi--Civita connection is of class $C^k$, $k\geq 1$, then geodesics will be $C^{k+2}$.
\end{lem}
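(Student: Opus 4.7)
The strategy is a standard bootstrap from the geodesic equation (6.2), which I would rewrite as
$$\frac{d^2x}{dt^2}=-\Phi\big(x(t),\tfrac{dx}{dt}\big),\qquad \Phi(v,\xi):=A(v,\xi)\xi.$$
Each time $x$ gains a derivative, so does the right-hand side, and iterating this $k$ times will promote $x$ from class $C^2$ to class $C^{k+2}$.

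First I would verify that $\Phi\colon\Omega\times V\to V$ is of class $C^k$. By the definition of a $C^k$ connection in Section 4, the form $A$ is $C^k$ in the sense that the map $(v,\xi,w)\mapsto A(v,\xi)w$ is $C^k$ on $\Omega\times V\times V$. Precomposing with the $C^\infty$ diagonal embedding $(v,\xi)\mapsto(v,\xi,\xi)$ and invoking the chain rule from Section 2 yields $\Phi\in C^k(\Omega\times V,V)$.

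Then I would run an induction on $j\in\{2,\ldots,k+1\}$ with inductive statement: $x\in C^j(I,\Omega)$ implies $x\in C^{j+1}(I,\Omega)$. The base case $j=2$ is the hypothesis on $x$. For the inductive step, assume $x\in C^j$; then $dx/dt\in C^{j-1}$, so $t\mapsto(x(t),dx/dt)$ is a $C^{j-1}$ curve into $\Omega\times V$. Since $j-1\le k$, we may regard $\Phi$ as a $C^{j-1}$ map, and the chain rule gives $\Phi(x,dx/dt)\in C^{j-1}(I,V)$. The geodesic equation then forces $d^2x/dt^2\in C^{j-1}$, and integrating twice shows $x\in C^{j+1}$. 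Iterating $k$ times from $j=2$ delivers $x\in C^{k+2}$, as claimed.

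I do not see a serious obstacle. The only minor bookkeeping point is that the chain rule in Section 2 is stated for two maps of matching differentiability class, whereas here I compose a $C^{j-1}$ curve with a $C^k$ map; however, since $j-1\le k$ the $C^k$ map is a fortiori $C^{j-1}$, so the stated chain rule applies at the common class $C^{j-1}$ and supplies exactly what is needed.
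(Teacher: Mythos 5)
Your bootstrap argument is exactly the ``straightforward induction'' the paper invokes without writing out: the paper gives no further detail, and your proof supplies it correctly, including the right reading of what it means for the $\Hom(V,V)$--valued form $A$ to be $C^k$ and the applicability of the chain rule at the common class $C^{j-1}$. No gaps.
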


The proof is by straightforward induction.---As in finite dimensions, geodesy can be defined for $C^1$ curves as well, and this is of some importance in [Le]. Suppose $g$ is a continuous metric on $\Omega$ and $I=[\alpha,\beta]\subset\bR$. The energy of a curve $x\in C^1(I,\Omega)$ is
$$
E(x)=\frac12\int_\alpha^\beta g\Big(x(t),\frac{dx(t)}{dt},\frac{dx(t)}{dt}\Big)dt.
$$
Two applications of Example 2.2 show that if $g$ is $C^1$ then $E$ is a $C^1$ function on $C^1(I,\Omega)$. Given $a,b\in\Omega$, let
$$
C_{ab}^1(I,\Omega)=\{x\in C^1(I,\Omega)\colon x(\alpha)=a,\, x(\beta)=b\}.
$$

The critical points of $E|C_{ab}^1(I,\Omega)$ we call energy critical curves.

\begin{thm}Let $g$ be a  Riemannian metric on $\Omega$ that admits a Levi--Civita connection.

(a)\, A curve $x\in C^2(I,\Omega)$ is energy critical if and only if it is geodesic.

(b)\, Suppose that there is a family $L$ of continuous linear forms on $V$ such that $\bigcap\{\Ker l\colon l\in L\}=(0)$, and for every $l\in L$ and $v\in\Omega$ there is an $\eta\in V$ such that $l=g(v,\eta,\cdot)$. Then any energy critical $x\in C^1(I,\Omega)$ is $C^2$, and so it is a geodesic.
\end{thm}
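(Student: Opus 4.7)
The plan is to base the argument on a first-variation calculation. For $x\in C_{ab}^1(I,\Omega)$ and $Y\in C^1(I,V)$ with $Y(\alpha)=Y(\beta)=0$, I would differentiate $g(x_s,\dot x_s,\dot x_s)$ at $s=0$ along $x_s=x+sY$, expand $dg$ by (5.3) with $(\eta,\zeta,\xi,\lambda,\mu)=(\dot x,\dot x,Y,\dot Y,\dot Y)$, and use symmetry of $g$ together with the torsion-free identity $A(x,Y)\dot x=A(x,\dot x)Y$ to obtain
\[
dE(x)Y=\int_\alpha^\beta\bigl[g(x,\dot Y,\dot x)+g\bigl(x,A(x,\dot x)Y,\dot x\bigr)\bigr]\,dt.
\]
For (a), when $x\in C^2$ I would integrate by parts using the along-a-curve compatibility identity $(d/dt)g(x,Y,\dot x)=g(x,D_tY,\dot x)+g(x,Y,D_t\dot x)$ (a consequence of chain rule and (5.3)), so that $dE(x)Y=-\int g(x,Y,D_t\dot x)\,dt$. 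Choosing $Y(t)=\phi(t)w$ with $\phi$ a localized bump and using positive definiteness of $g_v$ then forces $D_t\dot x\equiv0$, i.e.\ the geodesic equation; the converse is immediate.

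For (b) I specialize $Y(t)=\phi(t)w$ with $\phi\in C^1(I,\bR)$ vanishing at the endpoints and $w\in V$; the critical-point equation becomes
\[
\int_\alpha^\beta\phi'(t)\,g(x(t),w,\dot x(t))\,dt=-\int_\alpha^\beta\phi(t)\,g\bigl(x,A(x,\dot x)w,\dot x\bigr)\,dt.
\]
With $H_w(t):=\int_\alpha^t g(x,A(x,\dot x)w,\dot x)\,ds\in C^1(I,\bR)$, integration by parts on the right converts this to $\int_\alpha^\beta\phi'(t)\bigl[g(x,w,\dot x)-H_w(t)\bigr]\,dt=0$. Every continuous mean-zero function on $I$ arises as $\phi'$ for some admissible $\phi$, so the bracket is constant, and therefore $t\mapsto g(x(t),w,\dot x(t))$ lies in $C^1(I,\bR)$ for every $w\in V$.

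The delicate step is to promote this to honest differentiability of $\dot x$. Fix $t_0\in I$ and $l\in L$; the hypothesis furnishes $\eta_0\in V$ with $l(\cdot)=g(x(t_0),\eta_0,\cdot)$, so $l(\dot x(t))=g(x(t_0),\eta_0,\dot x(t))$. I would decompose
\[
l(\dot x(t))-l(\dot x(t_0))=\bigl[\varphi(t)-\varphi(t_0)\bigr]-\bigl[g(x(t),\eta_0,\dot x(t))-g(x(t_0),\eta_0,\dot x(t))\bigr],
\]
with $\varphi(t):=g(x(t),\eta_0,\dot x(t))$ the $C^1$ function just constructed. Expressing the second bracket as $\int_0^1 dg$ along the segment from $x(t_0)$ to $x(t)$, dividing by $t-t_0$, and letting $t\to t_0$ using (5.3) and continuity of $\dot x$, its difference quotient converges to $g(x(t_0),A_0\eta_0,\dot x(t_0))+g(x(t_0),\eta_0,A_0\dot x(t_0))$ where $A_0:=A(x(t_0),\dot x(t_0))$. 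The first summand exactly cancels $\varphi'(t_0)=g(x(t_0),A_0\eta_0,\dot x(t_0))$, leaving
\[
\frac{d}{dt}\Big|_{t_0}l(\dot x(t))=-l\bigl(A_0\dot x(t_0)\bigr).
\]

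Setting $y(t):=-A(x(t),\dot x(t))\dot x(t)\in C(I,V)$, the display above says $l\circ\dot x\in C^1(I,\bR)$ with derivative $l\circ y$ for every $l\in L$. Newton--Leibniz and continuity of $l$ then give $l\bigl(\dot x(t)-\dot x(t_0)-\int_{t_0}^t y\,ds\bigr)=0$, and the separation hypothesis $\bigcap_{l\in L}\Ker l=(0)$ removes $l$; thus $\dot x(t)=\dot x(t_0)+\int_{t_0}^t y\,ds$ is the antiderivative of a continuous $V$-valued function, so $x\in C^2(I,\Omega)$ with $\ddot x=y$, the geodesic equation. The chief obstacle I anticipate is the cancellation in paragraph three: the point $x(t_0)$ inside $g(x(t_0),\eta_0,\dot x(t))$ is held fixed, so recovering the $t$-derivative of $\dot x$ forces one to subtract off exactly the $v$-variation of $g$, which is precisely what (5.3) supplies and what makes the two limits align.
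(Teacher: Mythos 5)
Your proof is correct; part (a) and the endgame of part (b) coincide with the paper's argument, but the middle of part (b) takes a genuinely different route. The paper tests the first variation against $y=\chi\eta$ with $\chi$ an explicit piecewise-linear plateau function, approximates $\chi$ by $C^1_{00}$ functions, lets the ramp width $\var\to 0$ to get the integrated identity $g(x,\eta,\dot x)\big|_\sigma^\tau+\int_\sigma^\tau g\big(x,A(x,\dot x)\eta,\dot x\big)=0$, and then extracts the difference quotient of $\dot x$ from a three-term telescoping decomposition of $g(x,\eta,\dot x)\big|_\sigma^\tau$. You instead test against $Y=\phi w$ and invoke the scalar du Bois--Reymond lemma to conclude that $t\mapsto g(x(t),w,\dot x(t))$ is $C^1$ with derivative $g\big(x,A(x,\dot x)w,\dot x\big)$, and then transfer differentiability to $l(\dot x(t))$ by freezing the base point $x(t_0)$ and cancelling the $v$-variation of $g$ supplied by (5.3). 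Both arguments arrive at $\frac d{dt}l(\dot x)=-l\big(A(x,\dot x)\dot x\big)$ and finish identically via Newton--Leibniz and the separation hypothesis on $L$. Your route avoids the nonsmooth cutoff and the double limit $\var\to 0$, $\tau\to\sigma$, and it shows that the du Bois--Reymond idea --- which the paper says would require ``rather stronger assumptions'' as a standalone quick proof --- does work as a first step under the stated hypotheses, provided it is followed by the base-point-freezing argument rather than by an attempt to invert $\eta\mapsto g(v,\eta,\cdot)$; the paper's route, in exchange, needs no auxiliary lemma beyond the variational identity itself. One point worth making explicit in your third paragraph is the uniformity in the segment parameter $s$ of the limit of the integrand of $\int_0^1 dg$ as $t\to t_0$: it follows from continuity of $(v,\xi,w,\zeta)\mapsto g\big(v,A(v,\xi)w,\zeta\big)$ at the single point $\big(x(t_0),\dot x(t_0),\eta_0,\dot x(t_0)\big)$, and is no worse than the analogous step the paper leaves implicit in its $o(\sigma-\tau)$ estimates.
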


Under rather stronger assumptions in part (b) a quick proof could be given along the lines of du Bois Reymond's theorem, see \cite[pp. 172--173]{CH}; but the proof below is not too difficult, either.
\begin{proof}For brevity we denote $d/dt$ derivative by a dot. Using (5.3) we compute the differential of $E$:
\begin{equation}\begin{split}
dE(x,y)&=\frac12\frac d{ds}\Big|_{s=0}\int_\alpha^\beta g(x+sy,\dot x+s\dot y,\dot x+s\dot y)\\
&=\int_\alpha^\beta\bigl\lbrace g\big(x,A(x,y)\dot x,\dot x\big)+g(x,\dot x,\dot y)\bigr\rbrace=\int_\alpha^\beta g\big(x,\dot y+A(x,\dot x)y,\dot x\big),
\end{split}\end{equation}
the last equality because the connection has no torsion. Supposing $x\in C^2(I,\Omega)$, we follow Lagrange and integrate by parts to obtain
\begin{equation}
dE(x,y)=g(x,y,\dot x)\big|_\alpha^\beta -\int_\alpha^\beta g\big(x,y,\ddot x+A(x,\dot x)\dot x\big).
\end{equation}
Therefore $x$ is energy critical if and only if the integral in (6.4) vanishes for all $y\in C_{00}^1(I,V)$, and this latter is equivalent to (6.2).

Suppose next that $x$ is energy critical, but only $C^1$. We cannot integrate by parts, still, the last integral in (6.3) must vanish for all $y\in C_{00}^1(I,V)$. With $\alpha\le\sigma<\tau\le\beta$ and $0<\var<(\tau-\sigma)/2$ define $\chi\colon I\to\bR$ by
$$
\chi(t)=\begin{cases} 0 \text{ if}\quad t\le\sigma\text{ or }t\ge\tau\\ 1 \text{ if}\quad \sigma+\var\le t\le\tau-\var,\end{cases}\qquad\chi\text{ is linear on }[\sigma,\sigma+\var]\text{ and on }[\tau-\var,\tau].
$$
With an arbitrary $\eta\in V$ let $y=\chi\eta$. While $y\notin C_{00}^1(I,V)$,
\begin{equation}
\int_\alpha^\beta g\big(x,\dot y+A(x,\dot x)y, \dot x\big)=0
\end{equation}
still holds. Indeed, if we choose uniformly Lipschitz $\chi_k\in C_{00}^1(I,V)$ such that $\chi_k\to\chi$ almost everywhere, then (6.5) holds with $y_k=\chi_k\eta$ in place of $y$. Hence letting $k\to\infty$ (6.5) follows. In turn, letting $\var\to 0$ in (6.5), 
\begin{equation}
g(x,\eta,\dot x)\big|_\sigma^\tau +\int_\sigma^\tau g\big(x,A(x,\dot x)\eta,\dot x\big)=0
\end{equation}
follows. Of course, this holds also when $\sigma>\tau$. Writing
$$
g(x,\eta,\dot x)\big|_\sigma^\tau=g(x(\sigma),\eta,\dot x)\big|_\sigma^\tau+g(x,\eta,\dot x(\sigma))\big|_\sigma^\tau+g(x,\eta,\dot x\big|_\sigma^\tau)\big|_\sigma^\tau,
$$
as $\tau\to\sigma$, the last term on the right is $o(\sigma-\tau)$ and the penultimate term is 
$$
\sim(\tau-\sigma)\frac d{dt}\Big|_{t=\sigma}g\big( x(t),\eta,\dot x(\sigma)\big)=(\tau-\sigma)\big\lbrace g\big(x,A(x,\dot x)\eta,\dot x\big)+g\big(x,\eta,A(x,\dot x)\dot x\big)\big\rbrace\big|_\sigma.
$$
Dividing in (6.6) by $\tau-\sigma$ and letting $\tau\to\sigma$ gives therefore
$$
\lim_{\tau\to\sigma}g\Big(x(\sigma),\eta,\frac{\dot x(\tau)-\dot x(\sigma)}{\tau-\sigma}\Big)=-g\big(x,\eta,A(x,\dot x)\dot x\big)\big|_\sigma.
$$
In particular, for any $l\in L$ and $t\in [\alpha,\beta]$
\begin{align*}
\frac d{dt}l(\dot x(t))&=\lim_{\tau\to t} l\Big(\frac{\dot x(\tau)-\dot x(t)}{\tau-t}\Big)=-l\big(A(x(t),\dot x(t))\dot x(t)\big),\qquad\text{and so}\\
l\big(\dot x\big|_\sigma^\tau\big)&=\int_\sigma^\tau\frac d{dt}l(\dot x(t))\, dt=-l\Big(\int_\sigma^\tau A\big(x(t),\dot x(t)\big)\dot x(t)\, dt\Big).
\end{align*}
Hence $\dot x(\tau)-\dot x(\sigma)=-\int_\sigma^\tau A(x,\dot x)\dot x$, and $\dot x$ is indeed continuously differentiable.
\end{proof}
\begin{cor}Consider a second locally convex space, a continuous Riemannian metric $g'$ on an open $\Omega'\subset V'$, and a $C^1$ isometry $F\colon\Omega'\to\Omega$. Suppose the metric $g$ on $\Omega$ is as in Theorem 6.3b. If $y\in C^1(I,\Omega')$ minimizes energy in $C^1_{y(\alpha)y(\beta)}(I,\Omega')$, then $x=F\circ y$ is a $C^2$ geodesic.
\end{cor}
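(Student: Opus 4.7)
The strategy is to transfer the minimization problem from $(\Omega',g')$ to $(\Omega,g)$ via $F$ and then invoke Theorem 6.3(b) on the image curve $x=F\circ y$. Note that $x$ is $C^1$ because both $F$ and $y$ are.

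First, I would check that $F$ preserves energy. For any $\tilde y\in C^1(I,\Omega')$, the chain rule gives $d(F\circ\tilde y)/dt=F_*|_{\tilde y}\dot{\tilde y}$, and the isometry condition yields $g\bigl(F\circ\tilde y,d(F\circ\tilde y)/dt,d(F\circ\tilde y)/dt\bigr)=g'(\tilde y,\dot{\tilde y},\dot{\tilde y})$, so $E(F\circ\tilde y)=E'(\tilde y)$.

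Next, since $F$ is a $C^1$ diffeomorphism from $\Omega'$ onto the open subset $F(\Omega')\subset\Omega$, the assignment $\tilde y\mapsto F\circ\tilde y$ is a bijection from $C^1_{y(\alpha)y(\beta)}(I,\Omega')$ onto $C^1_{x(\alpha)x(\beta)}(I,F(\Omega'))$. Combined with energy preservation, the minimizing property of $y$ transfers to $x$, so $x$ minimizes $E$ on $C^1_{x(\alpha)x(\beta)}(I,F(\Omega'))$.

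The final step is to upgrade this to energy criticality on the larger space $C^1_{x(\alpha)x(\beta)}(I,\Omega)$ and then apply Theorem 6.3(b). Since $x(I)=F(y(I))$ is compact in the open set $F(\Omega')$, any $z\in C^1_{00}(I,V)$ yields $x+sz\in F(\Omega')$ for all sufficiently small $|s|$. The minimizing property therefore forces $dE(x,z)=0$ for every such $z$, which is precisely energy criticality in $\Omega$. Theorem 6.3(b) then delivers $x\in C^2(I,\Omega)$ and that $x$ is a geodesic. The only delicate point, though mild, lies in this last passage: one has to recognize that openness of $F(\Omega')$ plus compactness of $x(I)$ make arbitrary variations $z\in C^1_{00}(I,V)$ admissible for small $s$, even though $F$ need not be surjective onto $\Omega$.
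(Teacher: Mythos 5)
Your proof is correct and follows essentially the same route as the paper: transfer the minimization from $\Omega'$ to $\Omega$ using the fact that a $C^1$ isometry preserves energy, then invoke Theorem 6.3(b). The paper simply shortcuts your last step by declaring ``we can assume $F(\Omega')=\Omega$'' (Theorem 6.3(b) applies equally well on the open subset $F(\Omega')$, whose metric inherits the hypothesis), whereas you keep $\Omega$ fixed and use compactness of $x(I)$ in the open set $F(\Omega')$ to admit all variations $z\in C^1_{00}(I,V)$; both are valid.
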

\begin{proof}We can assume $F(\Omega')=\Omega$; then $x\in C^1(I,\Omega)$ minimizes energy in $C_{x(\alpha)x(\beta)}^1(I,\Omega)$, hence it is a $C^2$ geodesic by Theorem 6.3b.
\end{proof}
Parallel lifts and geodesics transform under isometries as expected, assuming the isometry is $C^2$:

\begin{lem}Let $g,g'$ be Riemannian metrics on $\Omega\subset V$, $\Omega'\subset V'$  with Levi--Civita connections $D,D'$ of class $C^1$.
Suppose $F\colon\Omega'\to\Omega$ is a $C^2$ isometry.
If $y\colon I\to\Omega'$ is a $C^1$ curve and $\eta\colon I\to V'$ its $D'$--parallel lift, then $F_*|_y \eta=\xi\colon I\to V$ is a $D$--parallel lift of $F\circ y$.
In particular, if $y$ is a geodesic for $g'$, then $F\circ y$ is a geodesic for $g$.
\end{lem}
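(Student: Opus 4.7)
The plan is to reduce the statement to the connection transformation formula (5.4) of Lemma 5.2 combined with a direct application of the chain rule. First I would invoke Lemma 5.2 applied to the isometry $F\colon\Omega'\to\Omega$---noting that the lemma as stated sends $\Omega\to\Omega'$, so the roles of $A$ and $A'$ are interchanged. Writing out (5.4) in this orientation yields, for $v\in\Omega'$ and $\xi\in V'$,
$$
F_*|_v^{-1}A\bigl(F(v),F_*\xi\bigr)F_*|_v+F_*|_v^{-1}(\xi F_*)|_v=A'(v,\xi),
$$
or equivalently, after moving $F_*|_v$ to the other side,
$$
A\bigl(F(v),F_*\xi\bigr)F_*|_v=F_*|_v A'(v,\xi)-(\xi F_*)|_v.
$$
This is the only real input from the Riemannian theory; everything else is calculus.

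Next I would write $x=F\circ y$ and $\xi(t)=F_*|_{y(t)}\eta(t)$, and compute $\dot\xi$ by the chain rule. Since $F$ is $C^2$, the map $(v,w)\mapsto F_*(v,w)=F_*|_v w$ is $C^1$, so
$$
\dot\xi(t)=(\dot y(t)F_*)|_{y(t)}\eta(t)+F_*|_{y(t)}\dot\eta(t).
$$
Note that by the chain rule $\dot x=F_*|_y\dot y$. Now substitute the parallel transport equation $\dot\eta=-A'(y,\dot y)\eta$ and apply the displayed transformation formula above (with $v=y(t)$, $\xi=\dot y(t)$) to the vector $\eta(t)$. The term $(\dot yF_*)|_y\eta$ appears with opposite signs in the two expressions and cancels, leaving
$$
\dot\xi+A(x,\dot x)\xi=0,
$$
which is exactly the parallel lift equation (6.1) for $\xi$ along $x$ with respect to $D$.

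For the geodesic assertion, I would specialize to $\eta=\dot y$: if $y$ is a $g'$-geodesic then $\dot y$ is by definition a $D'$-parallel lift of $y$, and $\xi=F_*|_y\dot y=\dot x$. Thus $\dot x$ is a $D$-parallel lift of $x=F\circ y$, and since $x$ is $C^2$ (as the composition of $C^2$ maps, cf.\ Example 2.2), this means $x$ is a geodesic for $g$.

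The computation itself is short; the only thing to watch is bookkeeping---keeping straight which of $A$, $A'$ is which after swapping the roles in Lemma 5.2, and making sure the chain rule for $F_*$ is legitimate, which requires precisely the $C^2$ hypothesis on $F$. There is no genuine obstacle; once the transformation formula is in hand, the identity $\dot\xi+A(x,\dot x)\xi=0$ falls out by cancellation.
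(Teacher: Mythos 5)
Your proposal is correct and follows the same route as the paper: the paper's proof of this lemma simply says that the first statement ``follows in a straightforward way from (5.4)'' (with the roles of $A$ and $A'$ swapped, exactly as you do) and that the geodesic claim follows by taking $\eta=\dot y$. You have merely written out the chain-rule computation and cancellation that the paper leaves implicit; the only cosmetic slips are the reference to ``Lemma 5.2'' (it is Lemma 5.1 in the paper's numbering) and citing Example 2.2 rather than the chain rule for the $C^2$ regularity of $F\circ y$.
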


\begin{proof}Since by Lemma 5.1 we know how the connection forms $A,A'$ of $D,D'$ are related, the first statement follows in a straightforward way from (5.4).
The second follows from the first, applied with $\eta=dy/dt$.
\end{proof}

\section{Geodesics, curvature, and parallel transport}

Lemma 5.1 describes how curvature transforms under $C^2$ isometries, and Lemma 6.5 how parallel lifts transform. Sometimes it is possible to prove the same  for isometries that are merely $C^1$, namely in spaces of K\"ahler potentials that we study in [Le]. 
The reason is that curvature and parallel lifts can be explained in terms of geodesics and Jacobi fields, as we presently show, and geodesics tend to be preserved by $C^1$ isometries, cf. Corollary 6.4.

Let $g$ be a Riemannian metric on $\Omega\subset V$ and $D=d+A$ its Levi--Civita connection. If $I\subset\bR$ is an interval, $x\in C^1(I,\Omega)$,   $\varphi\in C^1(I,V)$, and $t\in I$, we write
$$
D_{\dot x(t)}\varphi(t)=\dot\varphi(t)+A\big(x(t),\dot x(t)\big)\varphi(t),
$$
where dot still means $d/dt$.  Suppose  $x$ is a geodesic. Provided the Levi--Civita connection is $C^1$, we define a Jacobi field along $x$ as a solution $\varphi\in C^2(I,V)$ of the ODE
$$
D^2_{\dot x}\varphi=R(x,\dot x,\varphi)\dot x.
$$
If for $\theta$ in a   neighborhood of $0\in\bR$ we are given geodesics $x_\theta:I\to\Omega$ so that $x_0=x$, and the map $(t,\theta)\mapsto x_\theta(t)$ is 
$C^3$, the variation $\varphi=dx_\theta/d\theta|_{\theta=0}$ is a Jacobi field along  $x$, as one checks by taking $d/d\theta$ of the geodesic equation $D_{\dot x_{\theta}}\dot x_{\theta}=0$. 

Consider a disc $\Delta\subset\bR^2$ centered at the origin and a map $e\in C^5(\Delta,\Omega)$, whose  restriction to any radius of $\Delta$ is a unit speed geodesic. This implies that  $e_*$ is an isometry between the Euclidean metric on $T_0\Delta$ and $g$ on $e_*T_0\Delta$. 

\begin{lem}Assuming that the Levi--Civita connection is $C^3$, the length of the curve $[0,2\pi]\ni\theta\mapsto e(r\cos\theta, r\sin\theta)$ is
$$
2\pi r\big(1-Kr^2/6+o(r^2)\big)\qquad\text{as }r\to 0,
$$
where $K$ denotes sectional curvature  of $g$ along the plane $e_*T_0\Delta$.
\end{lem}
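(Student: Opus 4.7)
The plan is to identify the curve $\theta\mapsto e(r\cos\theta, r\sin\theta)$ as the value at $t=r$ of a Jacobi field along a smoothly varying family of radial geodesics, and to compute the length of that Jacobi field through a fourth-order Taylor expansion of its squared norm. Set $x_\theta(t)=e(t\cos\theta, t\sin\theta)$, $\rho(\theta)=(\cos\theta,\sin\theta)$, $\rho^\perp(\theta)=(-\sin\theta,\cos\theta)$; each $x_\theta$ is a unit-speed geodesic, so by the remark preceding the lemma (applied separately at each $\theta$), the variation
\[
\varphi_\theta(t):=\partial_\theta x_\theta(t)=e_*|_{t\rho(\theta)}\bigl(t\rho^\perp(\theta)\bigr)
\]
is a Jacobi field along $x_\theta$. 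The assumption $e\in C^5$ and the $C^3$ regularity of the connection (so that $R=dA+A\wedge A$ is $C^2$) are enough regularity. Since $e_*|_0\colon T_0\Delta\to e_*T_0\Delta$ is a linear isometry, the initial data are $\varphi_\theta(0)=0$ and $D_{\dot x_\theta}\varphi_\theta|_{t=0}=e_*|_0\rho^\perp(\theta)=:v(\theta)$, a unit vector orthogonal to $\dot x_\theta(0)=e_*|_0\rho(\theta)$, so that $\dot x_\theta(0)$ and $v(\theta)$ form an orthonormal basis of the fixed $2$-plane $e_*T_0\Delta$.

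Next I set $f(t)=g\bigl(x_\theta(t),\varphi_\theta(t),\varphi_\theta(t)\bigr)$ and expand at $t=0$. The compatibility of $D$ with $g$ applied along the curve (a consequence of (5.1), or equivalently (5.3)) yields, by direct differentiation,
\begin{align*}
f''&=2g(x,D^2_{\dot x}\varphi,\varphi)+2g(x,D_{\dot x}\varphi,D_{\dot x}\varphi),\\
f'''&=2g(x,D^3_{\dot x}\varphi,\varphi)+6g(x,D^2_{\dot x}\varphi,D_{\dot x}\varphi),\\
f''''&=2g(x,D^4_{\dot x}\varphi,\varphi)+8g(x,D^3_{\dot x}\varphi,D_{\dot x}\varphi)+6g(x,D^2_{\dot x}\varphi,D^2_{\dot x}\varphi).
\end{align*}
At $t=0$ the Jacobi equation $D^2_{\dot x}\varphi=R(x,\dot x,\varphi)\dot x$ together with $\varphi(0)=0$ gives $D^2_{\dot x}\varphi(0)=0$. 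Applying $D_{\dot x}$ once more to the Jacobi equation and expanding by the Leibniz rule, every term at $t=0$ is annihilated by $\varphi(0)=0$ or by the geodesic equation $D_{\dot x}\dot x=0$, except one:
\[
D^3_{\dot x}\varphi\bigl|_{t=0}=R\bigl(x(0),\dot x(0),v(\theta)\bigr)\dot x(0).
\]
Feeding these values in and using $|v(\theta)|^2=1$ gives $f(0)=f'(0)=f'''(0)=0$, $f''(0)=2$, and
\[
f''''(0)=8g\bigl(x(0),R(x(0),\dot x(0),v(\theta))\dot x(0),v(\theta)\bigr)=-8K,
\]
where the last equality uses the antisymmetry of $\cR(v,\xi,\eta,\cdot,\cdot)$ in its last two arguments (from metric compatibility) together with the definition $K=\cR(\cdot,\dot x(0),v(\theta),v(\theta),\dot x(0))$; since $\dot x_\theta(0),v(\theta)$ remain an orthonormal basis of the same plane $e_*T_0\Delta$ as $\theta$ varies, $K$ is independent of $\theta$.

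Taylor expansion therefore gives $f(t)=t^2-(K/3)t^4+o(t^4)$, hence $|\varphi_\theta(t)|=t\bigl(1-Kt^2/6+o(t^2)\bigr)$, with the error uniform in $\theta\in[0,2\pi]$ by compactness. Integrating in $\theta$ produces the desired length $2\pi r\bigl(1-Kr^2/6+o(r^2)\bigr)$. The main technical point I expect to have to spell out carefully is the computation of $D^3_{\dot x}\varphi(0)$: one must unwind the Leibniz rule for the covariant derivative of $R(x,\dot x,\varphi)\dot x$ along $x_\theta$ and check that every contribution other than $R(x,\dot x,D_{\dot x}\varphi)\dot x$ vanishes at $t=0$. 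This is also where the $C^3$ hypothesis on the connection is essential, since it ensures $R$ is $C^2$ and therefore that $D^3_{\dot x}\varphi$ is meaningful and continuous along $x_\theta$.
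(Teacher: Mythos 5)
Your proposal is correct and follows essentially the same route as the paper: the variation field $\varphi_\theta=\partial_\theta x_\theta$ is a Jacobi field vanishing at $0$, one computes $D^j_{\dot x}\varphi(0)$ for $j\le 3$ from the Jacobi and geodesic equations, expands $g(x,\varphi,\varphi)$ to fourth order in $t$, and integrates the square root in $\theta$. All the coefficients ($f''(0)=2$, $f''''(0)=-8K$) and the uniformity of the remainder match the paper's computation.
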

\begin{proof}We need to compute the length of the vectors $\partial e(r\cos\theta,r\sin\theta)/\partial\theta$. Let 
$$ 
x_\theta(t)= e(t\cos\theta,t\sin\theta)\qquad\text{and}\qquad \varphi_\theta=\partial x_\theta/\partial\theta, 
$$
so that $x_\theta$ is a geodesic and $\varphi_\theta$ is a Jacobi field along it,
\begin{equation}
D^2_{\dot x_\theta}\varphi_\theta=R(x_\theta,\dot x_\theta,\varphi_\theta)\dot x_\theta.
\end{equation}
In the calculation to follow we will omit the subscript $\theta$. By the chain rule, if we apply $D_{\dot x}$ to (7.1), we obtain at points where $\varphi$ vanishes
\begin{equation}
D^3_{\dot x}\varphi=R(x,\dot x,D_{\dot x}\varphi)\dot x.
\end{equation}
Since $\varphi(0)=0$, this allows us to compute $D^j_{\dot x}\varphi(0)$ for $j=0,1,2,3$. The isometry of $e_*|_0$ implies that $\dot x(0)=\xi$ and $D_{\dot x}\varphi(0)=\dot\varphi(0)=\eta$ form an orthonormal basis of $e_*T_0\Delta$. Thus, by (7.1), (7.2)
\begin{equation}
\varphi(0) = 0,\quad D_{\dot x}\varphi(0)=\eta,\quad D^2_{\dot x}\varphi(0)=0,\quad D^3_{\dot x}\varphi(0)=R(v,\xi,\eta)\xi.
\end{equation}
Setting $h(t)=g\big(x(t),\varphi(t),\varphi(t)\big)$, by Taylor's formula
$$
h(r)=\sum_{j=0}^3 h^{(j)}(0)\frac{r^j}{j!}+\int_0^r h^{(4)}(t)\frac{(r-t)^3}{3!}\, dt=\sum_{j=0}^4 h^{(j)}(0)\frac{r^j}{j!}+o(r^4),\qquad r\to 0,
$$
with $o(r^4)$ uniform for $0\le\theta\le 2\pi$. In turn,
$$
h^{(j)}=\sum_{i=0}^j \binom ji g\\(x, D^i_{\dot x}\varphi,D^{j-i}_{\dot x}\varphi).
$$
The values in (7.3) therefore give $h(0)=h'(0)=h'''(0)=0$, $h''(0)=2$, and  $h^{(4)}(0)=8g\big(v,R(v,\xi,\eta)\xi,\eta\big)=-8K$,  so that $h(r)=r^2\big(1-Kr^2/3+o(r^2)\big)$. Hence the length in question is
$$
\int_0^{2\pi}g\big(x_\theta(r),\varphi_\theta(r),\varphi_\theta(r)\big)^{1/2}\, d\theta=2\pi r\big(1-Kr^2/6+o(r^2)\big).
$$
\end{proof}

\begin{lem} Let $x\in C^2(I,\Omega)$ be a geodesic, $\xi\in C^1(I,V)$, and $\tau\in I$. 
Suppose  that there is a collection $H$ of Jacobi fields $\eta$ along $x|J$ with $J=J_{\eta}\subset I$ a neighborhood of $\tau$, such that each $\eta$ vanishes at $\tau$ and $\{\dot\eta(\tau)\colon\eta\in H\}$ is dense in $V$. Then $g(x,\xi,\eta)$ for any $\eta\in H$  is twice differentiable at $\tau$, and furthermore  $D_{\dot x}\xi(\tau)=0$ if and only if
\begin{equation}
\frac{d^2}{dt^2}g\big(x(t),\xi(t),\eta(t)\big)|_{t=\tau}=0\qquad\text{for all }\eta\in H.
\end{equation}
\end{lem}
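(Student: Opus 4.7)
The plan is to set $h_\eta(t) := g(x(t), \xi(t), \eta(t))$ and show both assertions by computing $h_\eta''(\tau)$ explicitly and getting the clean formula $h_\eta''(\tau) = 2g(x(\tau), D_{\dot x}\xi(\tau), \dot\eta(\tau))$. Once this is in hand, the ``if and only if'' falls out from density of $\{\dot\eta(\tau) : \eta \in H\}$ in $V$ together with positive-definiteness of $g_{x(\tau)}$.

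Metric compatibility, applied via (5.3) and the chain rule, first yields
$$h_\eta'(t) = g(x, D_{\dot x}\xi, \eta) + g(x, \xi, D_{\dot x}\eta)$$
throughout $J$: $\xi$ is $C^1$ so $D_{\dot x}\xi$ is continuous, and $\eta$ is $C^2$ so $D_{\dot x}\eta$ is $C^1$. The second summand is therefore $C^1$ on $J$, and one more differentiation at $\tau$ gives $g(x(\tau), D_{\dot x}\xi(\tau), D_{\dot x}\eta(\tau)) + g(x(\tau), \xi(\tau), D_{\dot x}^2\eta(\tau))$. Using $\eta(\tau) = 0$ yields $D_{\dot x}\eta(\tau) = \dot\eta(\tau)$ and, via the Jacobi equation and linearity of $R$ in its third argument, $D_{\dot x}^2\eta(\tau) = R(x(\tau), \dot x(\tau), 0)\dot x(\tau) = 0$, so this piece contributes $g(x(\tau), D_{\dot x}\xi(\tau), \dot\eta(\tau))$.

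The first summand $\phi(t) := g(x, D_{\dot x}\xi, \eta)$ is only continuous near $\tau$ (since $D_{\dot x}\xi$ is), but $\phi(\tau) = 0$ because $\eta(\tau) = 0$. The Taylor expansion $\eta(t) = (t-\tau)\dot\eta(\tau) + u(t)$ with $u(t)/(t-\tau) \to 0$ (immediate since $\eta$ is $C^1$ with $\eta(\tau) = 0$) lets me write
$$\frac{\phi(t)}{t-\tau} = g\big(x(t), D_{\dot x}\xi(t), \dot\eta(\tau)\big) + g\big(x(t), D_{\dot x}\xi(t), u(t)/(t-\tau)\big)$$
by linearity of $g_{x(t)}$ in the third slot; joint continuity of $g$ at $(x(\tau), D_{\dot x}\xi(\tau), 0)$ forces the second summand to $0$, yielding $\phi'(\tau) = g(x(\tau), D_{\dot x}\xi(\tau), \dot\eta(\tau))$. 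Adding the two contributions gives $h_\eta''(\tau) = 2g(x(\tau), D_{\dot x}\xi(\tau), \dot\eta(\tau))$, which simultaneously establishes twice differentiability at $\tau$ and records the value.

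The equivalence is now immediate. If $D_{\dot x}\xi(\tau) = 0$ then every $h_\eta''(\tau)$ vanishes; conversely, if all of them vanish, the continuous linear form $w \mapsto g(x(\tau), D_{\dot x}\xi(\tau), w)$ vanishes on the dense set $\{\dot\eta(\tau) : \eta \in H\}$ and hence on all of $V$, so taking $w = D_{\dot x}\xi(\tau)$ forces $D_{\dot x}\xi(\tau) = 0$ by positive-definiteness. The only genuinely delicate step is differentiating $\phi$ at $\tau$ despite $D_{\dot x}\xi$ being merely continuous; this is precisely where the hypothesis $\eta(\tau) = 0$ is essential, as it converts what would otherwise need a non-existent second derivative of $\xi$ into a limit that only sees $D_{\dot x}\xi$ at the single point $\tau$.
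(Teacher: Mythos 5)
Your proof is correct and follows essentially the same route as the paper's: differentiate $g(x,\xi,\eta)$ once by metric compatibility, note the term $g(x,\xi,D_{\dot x}\eta)$ is $C^1$ with second contribution killed at $\tau$ by $\eta(\tau)=0$ and the Jacobi equation, and differentiate the remaining term $g(x,D_{\dot x}\xi,\eta)$ at $\tau$ as the limit of $g\big(x(t),D_{\dot x}\xi(t),\eta(t)/(t-\tau)\big)$, then conclude by density and positive definiteness. Your explicit formula $h_\eta''(\tau)=2g\big(x(\tau),D_{\dot x}\xi(\tau),\dot\eta(\tau)\big)$ is exactly what the paper's computation yields.
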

Note that $\dot\eta(\tau)=D_{\dot x}\eta(\tau)$ if $\eta$ vanishes at $\tau$.
\begin{proof}For any $\eta\in H$
$$
\frac d{dt} g(x,\xi,\eta)=g(x,D_{\dot x}\xi,\eta)+g(x,\xi,D_{\dot x}\eta).
$$
The derivative of the second term is $g(x,D_{\dot x}\xi,D_{\dot x}\eta)+g(x,\xi,D^2_{\dot x}\eta)=g(x,D_{\dot x}\xi,D_{\dot x}\eta)$, and of the first term, at $\tau$,  is
$$
\lim_{t\to\tau} g\big(x(t),D_{\dot x}\xi(t),\eta(t)/(t-\tau)\big)=g\big(x(\tau),D_{\dot x}\xi(\tau),\dot\eta(\tau)\big).
$$
Hence (7.4) reduces to
$$
g\big(x(\tau),D_{\dot x}\xi(\tau),\dot\eta(\tau)\big)=0\qquad\text{for }\eta\in H,
$$
and by density in fact to $g\big(x(\tau),D_{\dot x}\xi(\tau), v\big)=0$ for all $v\in V$. But this latter is equivalent to $D_{\dot x}\xi(\tau)=0$.
\end{proof}

\end{document}